\newtheorem{theorem}{Theorem}[section]
\newtheorem{corollary}[theorem]{Corollary}
\newtheorem{lemma}[theorem]{Lemma}
\numberwithin{equation}{section}
\def\EE{\text{\pe E}}
\def\var{\text{\pe Var}}
\def\asarrow{\buildrel a.s.\over\longrightarrow}
\def\varrow{\buildrel v\over\longrightarrow}
\def\tr{\text{\pe tr}\,}
\font\pe=cmss10
\def\comp{\Bbb C}
\def\bfone{\text{\bf 1}}
\def\P{\text{\pe P}}
\def\xn{x^{(n)}}
\def\vertiii#1{{\left\vert\kern-0.25ex\left\vert\kern-0.25ex\left\vert #1
    \right\vert\kern-0.25ex\right\vert\kern-0.25ex\right\vert}}
\newcommand{\comment}[1]{{\color{blue}\fbox{#1}}}
\newcommand{\longcomment}[1]{\fbox{\begin{minipage}{.9\textwidth}\color{blue}#1\end{minipage}}}
\title[Limiting Eigenvalue Behavior of Random Matrices]{Limiting Eigenvalue Behavior of a Class of Large Dimensional Random Matrices Formed From a Hadamard Product}
\author{Jack W. Silverstein}
\address{Jack W. Silverstein,
Department of Mathematics, Box 8205, North Carolina State University, Raleigh, NC 27695-8205, USA
}
\email{jack@ncsu.edu}
\keywords{Eigenvalues of random matrices, Hadamard product, Deterministic equivalent.}
\subjclass[2010]
{Primary 15A18; 60F15; Secondary 62H99}
\begin{document}

\begin{abstract} 
This paper investigates the strong limiting behavior of the eigenvalues of the class of matrices 
$\frac1N(D_n\circ X_n)(D_n\circ X_n)^*$, studied in Girko 2001.  Here, $X_n=(x_{ij})$ is an $n\times N$ random
matrix consisting of independent complex standardized random variables, $D_n=(d_{ij})$, $n\times N$, has
nonnegative entries, and $\circ$ denotes Hadamard (componentwise) product.  Results are obtained under
assumptions on the entries of $X_n$ and $D_n$ which are different from those in Girko (2001), which include a
Lindeberg condition on the entries of $D_n\circ X_n$, as well as a bound on the average of the rows and columns 
of $D_n\circ D_n$.  The present paper separates the assumptions needed on $X_n$ and $D_n$. It assumes a 
Lindeberg condition on the entries of $X_n$, along with a tigntness-like condition on the entries of $D_n$, 
\end{abstract}

\maketitle
\section{Introduction}
This paper deals with the limiting eigenvalue behavior of the class of Hermitian nonnegative definite matrices
\begin{equation}\label{Bndef}B_n=\frac1N(D_n\circ X_n)(D_n\circ X_n)^*,\end{equation}
where for each positive integer $n$ $X_n=(x^{(n)}_{ij})$ is $n\times N$ with random variables $x^{(n)}_{ij}\in\Bbb C$, independent, and standardized ($\EE x^{(n)}_{ij}=0$, $\EE|x^{(n)}_{ij}|^2=1$), $D_n$ is
$n\times N$ containing nonrandom, nonnegative real numbers $d_{ij}=d^n_{ij}$, $\circ$ denotes Hadamard product, and $N=N(n)$ with $0<\liminf_n n/N\leq\limsup_n n/N<\infty$.  Such matrices arise in various situations when the dimension is large and where there is no prescribed structure to the elements in the $D_n$ matrix.

A standard way to pursue the limiting eigenvalue behavior of Hermitian random matrices as the dimension increases is through the empirical distribution 
function (e.d.f) of their eigenvalues, that is, for random Hermitian $n\times n$ matrix $A_n$, let for every $x\in\Bbb R$, $F^{A_n}(x)=\{\text{number of eigenvalues
of $A_n$ $\leq x$}\}/n.$  A standard tool used in understanding the (e.d.f.) of the eigenvalues has been since Mar\v cenko Pastur (1967),  the Stieltjes transform, 
where for arbitrary finite measure $\mu$ on $\Bbb R$ is defined by
\begin{equation}\label{STdef}m_{\mu}=\int\frac1{x-z}d\mu(x),\quad z\in\Bbb C^+\equiv\{z\in\Bbb C:\Im>0\}.
\end{equation}
It is analytic and takes values in  $\Bbb C^+$.  Notice that $m_{\mu}(z)|\leq\mu(\Bbb R)/\Im z$.
The Stieltjes transform of the measure induced by $F^{A_n}$ is then
$$m_{A_n}=\int\frac1{x-z}dF^{A_n}(x)=\frac1n\tr(A_n-zI)^{-1},$$
where  $I$ is the $n\times n$ identity matrix. and $\tr$ is the trace of a matrix. 

Due to the inversion formula
\begin{equation}\label{STinv}\mu([a,b])=\frac1{\pi}\lim_{\eta\to 0^+}\int_a^b\Im m_{\mu}(\xi+i\eta)d\xi\quad\text{($a$, $b$ continuity points of $\mu$)},
\end{equation}
it will follow that understanding the limiting behavior of $F^{A_n}$ can be handled by its Stieltjes transform.  

Work on the eigenvalue behavior of $B_n$ has been done in [Girko 2001].  Indeed,
  Theorem 10.1 of [Girko 2001], contains the following result:  assume  $x_{ij}^{(n)}\in\Bbb R$,  
\begin{equation}\label{davbd}\sup_n \max_{\substack{i=1,\ldots,n\\j=1,\ldots,N}}\biggl\{\frac1n\sum_{i=1}^N d_{ij}^2+\frac1N\sum_{j=1}^N d_{ij}^2\biggr\}<\infty,
\end{equation}
and a Lindeberg condition is satisfied, namely for arbitrary $\eta>0$
\begin{multline*}
\lim_{n\to\infty} \max_{\substack{ i=1,\ldots,n\\j=1,\ldots,N}}\biggl\{\frac1n\sum_{i=1}^nd_{ij}^2\EE((x_{ij}^{(n)})^2I(d_{ij}|x_{ij}^{(n)}|>\eta\sqrt n))
\hfill \\ \hfill+
\frac1N\sum_{j=1}^Nd_{ij}^2\EE((x_{ij}^{(n)})^2I(d_{ij}|x_{ij}^{(n)}|>\eta\sqrt n))\biggr\}=0,
\end{multline*}
where $I(A)$ is the indicator function on the set $A$.
Then, with $\|\cdot\|$ denoting the sup norm on functions, for each $n$, there exists a nonrandom probability distribution function $F^0_n$, such that, with probability one
$$\lim_{n\to\infty}\|F^{B_n}-F_n^0\|=0.$$
and $F_n^0$ has Stieltjes transform
\begin{equation}\label{GST}G_n(z):=\frac1n{   \sum_{i=1}^n}\frac1{ \frac1N\sum_{k=1}^N\frac{d_{ik}^2}{1+\frac nNe_k^0(z)}-z},
\end{equation}
where for each $z\in\Bbb C^+$, $e_1^0,\ldots,e_N^0$ are unique solutions lying in $\Bbb C^+$ to the system of equations
\begin{equation}
  \label{Ge0def}
  e_j^0(z)=\frac1n\sum_{i=1}^n\frac{d_{ij}^2}{ \frac1N\sum_{k=1}^N\frac{d_{ik}^2}{1+\frac nNe^0_k(z)}-z}.
\end{equation}

This result is among a collection which differs from typical results on limiting eigenvalue behavior (as the dimension of the matrix increases), in that there is no statement on what a possible limiting
e.d.f of the eigenvalues of $B_n$ could be.    However, the result is important in that it shows that  $F^{B_n}$ is becoming less random,  with 
a way  for deriving, through \eqref{STinv}, what it is close to.   The $F_n^0$'s can be thought of as  {\sl deterministic equivalents} of the e.d.f.'s

The aim of the current paper is to prove a result under different assumptions, in particular, for each $\eta>0$
\begin{equation}\label{Lind}\lim_{n\to\infty}\frac1{nN}\sum_{jk}\EE|\xn_{jk}|^2I(|\xn_{jk}|\ge\eta\sqrt n)=0,
\end{equation}
From this it is straightforward to construct a sequence $\{\eta_n\}$ of positive numbers for which $\eta_n\downarrow0$ as $n\to\infty$ and
\begin{equation}
  \label{1.2}
\lim_{n\to\infty}\frac1{\eta_n^2nN}\sum_{jk}\EE|\xn_{jk}|^2I(|\xn_{jk}|\ge\eta_n\sqrt n)=0.
\end{equation}

We also assume the existence of  positive $e$ and $f$ such that for all $n$ sufficiently large
\begin{equation}
  \label{1.3}
e<\eta_n\sqrt n\quad\text{and}\quad\EE\left|\xn_{jk}I(|\xn_{jk}|\leq e)-\EE( \xn_{jk}I(|\xn_{jk}|\leq e))\right|^2>f
\end{equation}
Also, we assume each column of $D_n$ is nonzero, and the matrix satisfies the following property.   For every $\epsilon>0$ there exists an $M_{\epsilon}>0$ such that for each $n$ there exists sets $E^n_{r\epsilon}\subset\{1,2,\ldots,n\},\ E^n_{c\epsilon}\subset\{1,2,\ldots,N\}$ such that
\begin{enumerate}
\item $\#E^n_{r\epsilon}+\#E^n_{c\epsilon}\leq \epsilon n$ ($\#$ denotes number of elements in the set)
\item $d_{jk}^n\leq M_{\epsilon}$ for $j\in {E^n_{r\epsilon}}^c$ and $k\in {E^n_{c\epsilon}}^c$.
\end{enumerate}

The result is expressed in terms of the following metric on sub-probability measures on $\Bbb R$ via their distribution functions:
$$D(F,G)\equiv\sum_{i=1}^{\infty}\left|\int f_idF-\int f_idG\right|2^{-i},$$
where $\{f_i\}$ is an enumeration of all continuous functions that take a constant $\frac1m$
value ($m$ a positive integer) on $[a,b]$ where $a,b$ are rational, 0 on $(-\infty,a-\frac1m]\cup[b+\frac1m,\infty)$, and linear on each of $[a-\frac1m,a],[b,b+\frac1m]$.  It is straightforward to verify that $D(\cdot,\cdot)$ induces the topology of weak convergence on probability measures, and the topology of vague convergence on the set of sub-probability measures.    Since for $x,y\in\Bbb R$, $|f_i(x)-f_i(y)|\leq|x-y|$, we have for two empirical distribution functions (e.d.f.) $F,G$ on the respective sets $\{x_1,\ldots,x_n)\},\{y_1,\ldots,y_n\}$
\begin{equation}
  \label{1.1}
  D(F,G)\leq\frac1n\sum_{j=1}^n|x_j-y_j|\leq \left(\frac1n\sum_{j=1}^n(x_j-y_j)^2\right)^{1/2}. 
\end{equation}

We will prove 
\begin{theorem}\label{1.1} For each $\epsilon>0$ choose $d_{\epsilon}\ge M_{\epsilon}$ and define $\widetilde d^n_{jk}=\widetilde d^{n\epsilon}_{jk}=d^n_{jk}I(d^n_{jk}\leq d_{\epsilon})$.  Then with probability one
\begin{equation}\label{epresult}
\limsup_nD(F^{B_n},F^0_{n,\epsilon})\leq\epsilon,
\end{equation}
where $F^0_{n,\epsilon}$ is the probability distribution function having Stieltjes transform
\begin{equation}\label{epST}G_n(z)=\frac1n{   \sum_{i=1}^n}\frac1{ \frac1N\sum_{k=1}^N
\frac{\widetilde d^{n\!\ 2}_{ik}}{1+\frac nNe_k^0(z)}-z},
\end{equation}
and for each $z\in\Bbb C^+$, $e_k^0=e^0_{n,k}(z)$ are unique solutions lying in $\Bbb C^+$ to the system of equations
\begin{equation}
  \label{epe0def}
  e_j^0(z)=\frac1n\sum_{i=1}^n\frac{\widetilde d_{ij}^{n\!\ 2}}{ \frac1N\sum_{k=1}^N\frac{{\widetilde d_{ik}}^{n\!\ 2}}{1+\frac nNe^0_k(z)}-z}.
\end{equation}
\end{theorem}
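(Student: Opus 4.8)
The plan is to reduce the theorem to a known "nice" case by a truncation-and-comparison argument, exploiting the metric $D$ and the bound \eqref{1.1}. First I would fix $\epsilon>0$ and let $E^n_{r\epsilon}, E^n_{c\epsilon}$ and $M_\epsilon$ be as supplied by the hypothesis on $D_n$, and set $\widetilde D_n=(\widetilde d^n_{jk})$ with $\widetilde d^n_{jk}=d^n_{jk}I(d^n_{jk}\le d_\epsilon)$. Define the comparison matrix $\widetilde B_n=\frac1N(\widetilde D_n\circ X_n)(\widetilde D_n\circ X_n)^*$. The two matrices $B_n$ and $\widetilde B_n$ differ only in the rows $j\in E^n_{r\epsilon}$ and the columns $k\in E^n_{c\epsilon}$ of $D_n\circ X_n$, i.e.\ $(D_n\circ X_n)-(\widetilde D_n\circ X_n)$ has all its nonzero entries confined to at most $\#E^n_{r\epsilon}$ rows and $\#E^n_{c\epsilon}$ columns. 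Writing $R_n = (D_n-\widetilde D_n)\circ X_n$, we have $D_n\circ X_n = \widetilde D_n\circ X_n + R_n$ with $\mathrm{rank}(R_n)\le \#E^n_{r\epsilon}+\#E^n_{c\epsilon}\le\epsilon n$. By the standard rank inequality for e.d.f.'s of $MM^*$-type matrices (Bai--Silverstein, Theorem A.43/A.44), $\|F^{B_n}-F^{\widetilde B_n}\|_\infty\le \frac{2\,\mathrm{rank}(R_n)}{n}\le 2\epsilon$, and since $D$ metrizes vague convergence and is dominated by a constant times the Kolmogorov distance on distribution functions, this yields $D(F^{B_n},F^{\widetilde B_n})\le C\epsilon$ for all $n$, almost surely (indeed deterministically once the realization of $X_n$ is fixed).

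Next I would handle $\widetilde B_n$ by invoking the existing deterministic-equivalent machinery — this is the case where the $D$-matrix entries are uniformly bounded by $d_\epsilon$. Here the Lindeberg condition \eqref{Lind} on the entries of $X_n$, together with the uniform bound $\widetilde d^n_{jk}\le d_\epsilon$, implies the Lindeberg-type condition \eqref{davbd}-style hypothesis of the Girko-type result: $\widetilde d^{\,n}_{jk}|x^{(n)}_{jk}|\le d_\epsilon|x^{(n)}_{jk}|$, so the indicator $I(\widetilde d^n_{jk}|x^{(n)}_{jk}|>\eta\sqrt n)$ is dominated by $I(|x^{(n)}_{jk}|>(\eta/d_\epsilon)\sqrt n)$, and \eqref{Lind} (applied with $\eta/d_\epsilon$ in place of $\eta$) forces the relevant sums to vanish; the average-of-squares bound \eqref{davbd} for $\widetilde D_n$ is immediate from $\widetilde d^n_{jk}\le d_\epsilon$. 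Also, each column of $\widetilde D_n$ is nonzero for $n$ large (the entries surviving truncation in any column whose original entries are not all tiny), though I should be slightly careful here: if truncation kills an entire column of $D_n$, that column of $\widetilde D_n\circ X_n$ is zero, contributing a zero eigenvalue, which only shifts $F^{\widetilde B_n}$ by $O(1/n)$ in $D$-distance and can be absorbed; alternatively one argues these columns are few. Applying the established result to $\widetilde B_n$ gives, with probability one, $\|F^{\widetilde B_n}-F^0_{n,\epsilon}\|_\infty\to 0$ where $F^0_{n,\epsilon}$ has the Stieltjes transform \eqref{epST}--\eqref{epe0def}; hence $D(F^{\widetilde B_n},F^0_{n,\epsilon})\to 0$ a.s.

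Combining the two estimates via the triangle inequality for $D$ gives, almost surely,
\[
\limsup_n D(F^{B_n},F^0_{n,\epsilon}) \le \limsup_n D(F^{B_n},F^{\widetilde B_n}) + \limsup_n D(F^{\widetilde B_n},F^0_{n,\epsilon}) \le C\epsilon + 0,
\]
and after re-examining the constant $C$ (it comes only from comparing $D$ with the Kolmogorov metric and can be taken to be $1$ if one works directly with \eqref{1.1}-type bounds, or rescales $\epsilon$ at the outset by choosing the splitting sets with $\#E^n_{r\epsilon}+\#E^n_{c\epsilon}\le(\epsilon/2C)n$), one obtains \eqref{epresult}. The main obstacle I anticipate is \emph{not} the rank step — that is essentially bookkeeping — but rather ensuring that the hypotheses \eqref{1.2}--\eqref{1.3} genuinely deliver everything the earlier deterministic-equivalent theorem needs for $\widetilde B_n$: in particular the uniform nondegeneracy condition \eqref{1.3} must be shown to survive truncation of $X_n$ at level $e$ (it does, verbatim, since \eqref{1.3} is a statement about $X_n$ alone), and one must verify that the solutions $e^0_{n,k}(z)$ to \eqref{epe0def} remain well-defined and unique in $\mathbb{C}^+$ for the truncated $\widetilde D_n$ — which follows from the general existence/uniqueness theory for such systems whenever the coefficient matrix $(\widetilde d^{\,n\,2}_{ik})$ is nonzero in each column. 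A secondary technical point is that the earlier result as quoted assumes $x^{(n)}_{ij}\in\mathbb R$; if the present paper's framework has already extended that to $x^{(n)}_{ij}\in\mathbb C$ (as the abstract indicates), I would cite that extended version, and otherwise the complex case is handled by the usual real-imaginary-part decomposition with no change to the structure of the argument.
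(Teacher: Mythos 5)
Your opening move — replacing $D_n$ by its truncation $\widetilde D_n=(\widetilde d^n_{jk})$ and controlling $D(F^{B_n},F^{\widetilde B_n})$ by the rank of $(D_n-\widetilde D_n)\circ X_n$ — is exactly what the paper does, and your rank bound $\#E^n_{r\epsilon}+\#E^n_{c\epsilon}\le\epsilon n$ is correct (the paper deduces it from Lemma \ref{L.1.1.5} and Lemma \ref{Lrowcol}; and Lemma \ref{L.1.2} gives $\|F^{AA^*}-F^{BB^*}\|\le\frac1n\mathrm{rank}(A-B)$ directly, with no extra factor, so your $C$ can be taken to be $1$ without any rescaling). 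That much is fine.

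The gap is in the second step: reducing the truncated problem to Girko's Theorem 10.1. That theorem's Lindeberg hypothesis is a \emph{maximum} over rows and columns,
\[
\lim_n\max_{i,j}\biggl\{\tfrac1n\sum_{i}d_{ij}^2\EE\bigl((x_{ij}^{(n)})^2I(d_{ij}|x_{ij}^{(n)}|>\eta\sqrt n)\bigr)+\tfrac1N\sum_{j}d_{ij}^2\EE\bigl((x_{ij}^{(n)})^2I(d_{ij}|x_{ij}^{(n)}|>\eta\sqrt n)\bigr)\biggr\}=0,
\]
whereas the paper's assumption \eqref{Lind} controls only the \emph{average} $\frac1{nN}\sum_{jk}\EE|x^{(n)}_{jk}|^2I(|x^{(n)}_{jk}|\ge\eta\sqrt n)$. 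Even with $\widetilde d^n_{jk}\le d_\epsilon$, substituting $\eta/d_\epsilon$ bounds the Girko quantity by $d_\epsilon^2\max_j\frac1n\sum_i\EE|x^{(n)}_{ij}|^2I(|x^{(n)}_{ij}|>(\eta/d_\epsilon)\sqrt n)$, and an averaged Lindeberg condition does not control this max (e.g.\ if one fixed column has heavy-tailed but unit-variance entries and all others are bounded, the average vanishes while the max does not). So the hypotheses of Girko's theorem are genuinely stronger in this respect, not implied by \eqref{Lind}, and the reduction fails. There is also the secondary point you flag yourself: Girko's theorem as quoted assumes $x^{(n)}_{ij}\in\mathbb R$; the paper allows complex entries, and there is no cited complex version. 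Because of these two obstructions, the paper cannot — and does not — cite Girko for the truncated matrix. Instead, after the $D_n$-truncation it carries out its own full program: truncation of $X_n$ at level $\eta_n\sqrt n$ (with an almost-sure Bernstein-inequality argument), centralization and standardization to produce $\widehat X_n$ (using \eqref{1.1}, Lemma \ref{L.1.1}, and Lemma \ref{L.1.5}), a martingale-difference decomposition with Lemmas \ref{L.1.6}--\ref{L.1.10} to establish the approximate fixed-point system \eqref{mlimit}--\eqref{elimit}, an existence-and-uniqueness argument for $e^0_k$ via the nonnegative-matrix spectral-radius lemmas (\ref{L.1.11}--\ref{L.1.13}), a further spectral-radius estimate showing $e_k(z)\to e^0_k(z)$ uniformly, and finally Lemma \ref{vaguelimit} to convert pointwise convergence of Stieltjes transforms into $D$-convergence of the distribution functions. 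Your proposal collapses all of that into a citation that the available hypotheses do not support.
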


\begin{corollary}With probability one, there exists a (random) sequence $\{\epsilon_n\}$, such that, with $\widetilde d^n_{ij}$ and $F^0_{n,\epsilon_n}$ defined as above, we have
\begin{equation}\label{epnlimit} \lim_{n\to\infty}D(F^{B_n},F^0_{n,\epsilon_n})=0.
\end{equation}
\end{corollary}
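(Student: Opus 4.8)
The statement to prove is the Corollary: from Theorem~\ref{1.1} we want to extract a single diagonal sequence $\{\epsilon_n\}$, depending on the realization of the randomness, along which $D(F^{B_n},F^0_{n,\epsilon_n})\to 0$ almost surely. The natural approach is a standard diagonalization/subsequence argument. Fix a countable sequence $\epsilon^{(\ell)}\downarrow 0$, say $\epsilon^{(\ell)}=1/\ell$. By Theorem~\ref{1.1}, for each fixed $\ell$ there is an event $\Omega_\ell$ of probability one on which $\limsup_n D(F^{B_n},F^0_{n,\epsilon^{(\ell)}})\le \epsilon^{(\ell)}$. Intersecting, $\Omega_0=\bigcap_{\ell\ge 1}\Omega_\ell$ has probability one, and on $\Omega_0$ \emph{all} these $\limsup$ bounds hold simultaneously. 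So the whole argument takes place on $\Omega_0$, and from here on it is a deterministic extraction argument carried out pathwise.

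On $\Omega_0$, for each $\ell$ choose (depending on the path) an integer $n_\ell$, strictly increasing in $\ell$, such that $D(F^{B_n},F^0_{n,\epsilon^{(\ell)}})\le 2\epsilon^{(\ell)}=2/\ell$ for all $n\ge n_\ell$; this is possible because the $\limsup$ is $\le 1/\ell$. Now define $\epsilon_n=\epsilon^{(\ell)}=1/\ell$ for $n_\ell\le n<n_{\ell+1}$ (and $\epsilon_n$ arbitrary, say $\epsilon_n=1$, for $n<n_1$). Then for $n\ge n_\ell$ we have $\epsilon_n\le 1/\ell'$ with $\ell'\ge\ell$ and, by construction, $D(F^{B_n},F^0_{n,\epsilon_n})\le 2/\ell'\le 2/\ell$. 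Hence $D(F^{B_n},F^0_{n,\epsilon_n})\to 0$ as $n\to\infty$, which is \eqref{epnlimit}. Note $\epsilon_n\downarrow 0$ as well, though only the limit in \eqref{epnlimit} is asserted.

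There is one bookkeeping point worth spelling out: for each $\epsilon>0$ the Theorem is stated for a \emph{choice} $d_\epsilon\ge M_\epsilon$ and the corresponding truncation $\widetilde d^{n}_{jk}=d^n_{jk}I(d^n_{jk}\le d_\epsilon)$; to have a well-defined sequence $F^0_{n,\epsilon_n}$ one simply fixes, once and for all, a choice of $d_{\epsilon^{(\ell)}}$ for each $\ell$ (e.g.\ $d_{\epsilon^{(\ell)}}=M_{\epsilon^{(\ell)}}$), and then $\widetilde d^n_{ij}$ in the statement of the Corollary refers to this fixed family. With that convention the objects $F^0_{n,\epsilon_n}$ are the ones appearing in \eqref{epST}--\eqref{epe0def} with $d_\epsilon$ replaced by $d_{\epsilon_n}$, and the argument above goes through verbatim. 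The only mildly delicate issue is that the cutoffs $n_\ell$ are path-dependent, so $\{\epsilon_n\}$ is genuinely a random sequence; this is harmless since measurability is not claimed and the conclusion is an almost-sure statement about a fixed-in-advance probability-one event $\Omega_0$. I do not expect any real obstacle here — the content is entirely in Theorem~\ref{1.1}, and the Corollary is the routine diagonalization on top of it.
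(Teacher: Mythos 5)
Your argument is correct and is essentially identical to the paper's: both proofs intersect the probability-one events from Theorem~\ref{1.1} at levels $\epsilon=1/m$, then perform a pathwise recursive selection of thresholds $n_\ell$ (the paper's $N_m$) beyond which $D(F^{B_n},F^0_{n,1/\ell})\le 2/\ell$, and define $\epsilon_n$ as the corresponding piecewise-constant diagonal sequence. The only addition in your writeup is the explicit remark about fixing the family $\{d_{\epsilon^{(\ell)}}\}$ once and for all, which the paper leaves implicit.
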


The present assumptions allow for a clearer understanding as to the conditions needed on the entries of $X_n$ and $D_n$ separately, so for example, determining the 
applicability of the theorem on matrix ensembles with the same $X_n$ satisfying \eqref{Lind},\eqref{1.2},\eqref{1.3} would only require investigating the properties on different $D_n$'s.
  The Lindeberg condition is just assumed
on the $x^{(n)}_{ij}$'s, while a tightness-like condition on the $d_{ij}$'s is only needed.  There can be some exceedingly large values of $d_{ij}$, just not too many of them.   The
value on the left side of \eqref{davbd} can go unbounded.

The conclusion of the Theorem should not be considered substantially weaker than the one in Theorem 10.1 of [Girko 2001].   Typical of eigenvalue results of this nature, it is not
known how large the dimension should be in order to obtain prescribed accuracy and reliability.   One of the main uses of these limit laws is to be able to identify and understand the
underlying assumptions on the makeup of the matrix, typically by viewing a histogram of the random eigenvalues.   

The proofs of the Theorem and corollary are given in section 3, following a section on lemmas needed in the proof.  Section 3 ends with a scheme to compute $e^0$.

\section{Lemmas}
This section contains results on matrix theory and probability need in the proofs.
\begin{lemma} [{\cite[Corollary 7.3.8]{horn1990matrix}}]
  \label{L.1.1} 
  For $r\times s$ matrices $A$ and $B$ with respective singular values $\sigma_1\ge\cdots\ge\sigma_q,\ \tau_1\ge\cdots\ge\tau_q$ where $q=\min\{r,s\}$, we have
$$\left(\sum_{i=1}^q(\sigma_i-\tau_i)^2\right)^{1/2}\leq\|A-B\|_2,$$
where $\|\cdot\|_2$ is the Frobenius matrix norm.
\end{lemma}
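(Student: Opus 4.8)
The plan is to reduce the rectangular statement to the classical Hoffman--Wielandt inequality for Hermitian matrices by the standard dilation trick. Given the $r\times s$ matrices $A$ and $B$, set
$$\widehat A=\begin{pmatrix}0&A\\ A^*&0\end{pmatrix},\qquad\widehat B=\begin{pmatrix}0&B\\ B^*&0\end{pmatrix},$$
both Hermitian of size $(r+s)\times(r+s)$. A direct computation from the singular value decomposition shows that the eigenvalues of $\widehat A$, listed in decreasing order, are $\sigma_1\ge\cdots\ge\sigma_q$, then $|r-s|$ copies of $0$, then $-\sigma_q\ge\cdots\ge-\sigma_1$, and similarly for $\widehat B$ with the $\tau_i$'s; moreover $\|\widehat A-\widehat B\|_2^2=2\|A-B\|_2^2$. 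Hence, once we know the Hermitian Hoffman--Wielandt bound --- that Hermitian matrices $H,K$ with decreasingly ordered eigenvalues $\lambda_1\ge\cdots\ge\lambda_m$ and $\mu_1\ge\cdots\ge\mu_m$ satisfy $\sum_{i=1}^m(\lambda_i-\mu_i)^2\le\|H-K\|_2^2$ --- applying it to $\widehat A,\widehat B$ makes the positive block contribute $\sum_{i=1}^q(\sigma_i-\tau_i)^2$, the zero block contribute $0$, and the negative block contribute another $\sum_{i=1}^q(\sigma_i-\tau_i)^2$, so that $2\sum_{i=1}^q(\sigma_i-\tau_i)^2\le2\|A-B\|_2^2$, which is the claim.

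For the Hermitian inequality I would use spectral decompositions $H=\sum_i\lambda_iu_iu_i^*$, $K=\sum_j\mu_jv_jv_j^*$ with orthonormal eigenvectors and expand
$$\|H-K\|_2^2=\sum_i\lambda_i^2+\sum_j\mu_j^2-2\sum_{i,j}\lambda_i\mu_j|u_i^*v_j|^2.$$
The matrix $P=(|u_i^*v_j|^2)_{i,j}$ has nonnegative entries and all row and column sums equal to $1$, so it is doubly stochastic; by the Birkhoff--von Neumann theorem it is a convex combination of permutation matrices, whence $\sum_{i,j}\lambda_i\mu_jP_{ij}\le\max_\pi\sum_i\lambda_i\mu_{\pi(i)}$, and by the rearrangement inequality this maximum equals $\sum_i\lambda_i\mu_i$ because the $\lambda$'s and $\mu$'s are both sorted decreasingly. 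Therefore $\|H-K\|_2^2\ge\sum_i\lambda_i^2+\sum_i\mu_i^2-2\sum_i\lambda_i\mu_i=\sum_i(\lambda_i-\mu_i)^2$, as needed. Alternatively, the lemma follows in one step from von Neumann's trace inequality $|\tr AB^*|\le\sum_{i=1}^q\sigma_i\tau_i$ applied directly to $A$ and $B$, via $\|A-B\|_2^2=\sum_i\sigma_i^2+\sum_i\tau_i^2-2\Re\tr AB^*$; this route just moves the difficulty into proving the trace inequality, which is of comparable depth.

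The only genuinely non-routine step is the combinatorial core --- passing through Birkhoff's theorem and verifying via rearrangement that the identity pairing of the sorted eigenvalues is the optimal matching; the dilation identities and the Frobenius-norm expansions are bookkeeping. Since the statement is quoted verbatim as \cite[Corollary 7.3.8]{horn1990matrix}, one may simply cite it, but the dilation argument above records transparently why the rectangular case is equivalent to the standard Hermitian Hoffman--Wielandt theorem.
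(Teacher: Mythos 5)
Your proof is correct. The paper gives no argument of its own for this lemma --- it is quoted verbatim from Horn and Johnson --- and your route (the Hermitian dilation $\widehat A=\begin{pmatrix}0&A\\ A^*&0\end{pmatrix}$ reducing the claim to the Hoffman--Wielandt inequality, itself proved via Birkhoff--von Neumann and rearrangement) is exactly the standard proof found in that reference, with all the bookkeeping ($\|\widehat A-\widehat B\|_2^2=2\|A-B\|_2^2$ and the pairing of the $\pm\sigma_i$ with the $\pm\tau_i$ in the sorted spectra) handled correctly.
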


Let $\|\cdot\|$ denote the sup-norm on bounded functions from $\Bbb R$ to $\Bbb R$.  It is clear that for probability distribution 
functions $F$ and $G$
$$D(F,G)\leq \|F-G\|.$$

\begin{lemma}
\label{L.1.1.5}
For matrices $A$, $B$ of the same dimension, $rank(A+B)\leq rank(A) + rank(B)$.
\end{lemma}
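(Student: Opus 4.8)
The plan is to argue via column spaces. Write $r$ for the common number of rows of $A$ and $B$, and for any matrix $C$ with $r$ rows let $\mathcal{C}(C)\subseteq\Bbb C^r$ denote its column space, so that $rank(C)=\dim\mathcal{C}(C)$. First I would observe that, since $A$ and $B$ have the same dimensions, each column of $A+B$ is the sum of the corresponding column of $A$ and of $B$, hence lies in the subspace $\mathcal{C}(A)+\mathcal{C}(B)$ spanned by the columns of $A$ together with those of $B$; thus $\mathcal{C}(A+B)\subseteq\mathcal{C}(A)+\mathcal{C}(B)$. Next I would invoke the elementary fact that $\dim(U+V)\leq\dim U+\dim V$ for subspaces $U,V$ of a finite-dimensional space (immediate from concatenating spanning sets of $U$ and $V$ to span $U+V$, or from $\dim(U+V)=\dim U+\dim V-\dim(U\cap V)$). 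Chaining these inequalities gives $rank(A+B)=\dim\mathcal{C}(A+B)\leq\dim\bigl(\mathcal{C}(A)+\mathcal{C}(B)\bigr)\leq\dim\mathcal{C}(A)+\dim\mathcal{C}(B)=rank(A)+rank(B)$.

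An equivalent route, should one prefer to avoid naming subspaces, uses rank factorizations: write $A=A_1A_2$ and $B=B_1B_2$, where $A_1$ has $rank(A)$ columns and $B_1$ has $rank(B)$ columns; then $A+B=\begin{pmatrix}A_1 & B_1\end{pmatrix}\begin{pmatrix}A_2\\ B_2\end{pmatrix}$, and since the rank of a product is at most the rank of each factor, while any matrix has rank at most its number of columns, we get $rank(A+B)\leq rank\begin{pmatrix}A_1 & B_1\end{pmatrix}\leq rank(A)+rank(B)$. There is no genuine obstacle in either approach; the only hypothesis actually used is that $A$ and $B$ share the same dimensions, which is exactly what makes the entrywise sum meaningful and places its columns in the same ambient space $\Bbb C^r$.
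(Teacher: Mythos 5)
The paper states this lemma without proof, treating it as a standard fact, so there is no in-paper argument to compare against. Both of your arguments are correct and standard: the column-space containment $\mathcal{C}(A+B)\subseteq\mathcal{C}(A)+\mathcal{C}(B)$ combined with subadditivity of dimension, and the rank-factorization route via $A+B=\begin{pmatrix}A_1 & B_1\end{pmatrix}\begin{pmatrix}A_2\\ B_2\end{pmatrix}$, each give a complete and clean proof; either would serve perfectly well if the paper chose to include one.
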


\begin{lemma}
\label{Lrowcol}
Let $A$ be $r\times s$.  Then $rank(A)\leq$ number of nonzero rows of $A$ $+$ the number of nonzero columns of $A$.
\end{lemma}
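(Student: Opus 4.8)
The plan is to deduce the inequality from the elementary fact that the row space of a matrix is spanned by its nonzero rows; if one prefers to stay entirely within the language of the preceding lemmas, the same step can be recast as a short induction on the number of nonzero rows via Lemma~\ref{L.1.1.5}, since a matrix with a single nonzero row has rank at most one.

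First I would fix notation: let $p$ be the number of nonzero rows of $A$ and $q$ the number of nonzero columns. The first (and only substantive) step is the bound $rank(A)\leq p$: deleting the zero rows of $A$ changes neither the row space nor the rank and leaves a $p\times s$ matrix, so its rank --- hence $rank(A)$ --- is at most $p$. Equivalently, write $A=\sum_{i=1}^{r}A^{(i)}$, where $A^{(i)}$ is $A$ with every row but the $i$-th set to zero; only the $p$ terms coming from nonzero rows are nonzero, each has rank at most one, and combining these $p$ terms through Lemma~\ref{L.1.1.5} gives $rank(A)\leq p$. The second step is trivial: $q\geq 0$, so $rank(A)\leq p\leq p+q$, which is the assertion.

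I do not expect any genuine obstacle here; the only point worth a word is the degenerate case $A=0$, where $p=q=rank(A)=0$ and the inequality is immediate. For completeness I would also record the symmetric estimate $rank(A)\leq q$, obtained by running the same argument on columns (equivalently, on the transpose of $A$), so that in fact $rank(A)\leq\min\{p,q\}$. The weaker additive form is the one retained because it is the shape in which the lemma will be used later: applied once to each of the two pieces into which a perturbation confined to finitely many rows and finitely many columns naturally splits --- the first piece having at most $p$ nonzero rows, the second at most $q$ nonzero columns --- with the two bounds added through Lemma~\ref{L.1.1.5}.
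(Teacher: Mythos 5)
Your proof is correct, and it takes a somewhat cleaner route than the paper. The paper's proof writes $A$ as a convex combination of the row and column decompositions,
$$A=\tfrac12\sum_{j=1}^r e^j_r a_{j\cdot}+\tfrac12\sum_{k=1}^s a_{\cdot k}{e^k_s}^T,$$
discards the terms coming from zero rows and zero columns (leaving $p+q$ rank-one summands), and then invokes Lemma~\ref{L.1.1.5}. You instead observe that the row decomposition alone already gives $\operatorname{rank}(A)\le p$ (and symmetrically $\le q$), so in fact $\operatorname{rank}(A)\le\min\{p,q\}\le p+q$. Both arguments ultimately rest on the same subadditivity of rank from Lemma~\ref{L.1.1.5}; yours simply uses a shorter decomposition and yields the sharper $\min$ bound as a by-product. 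One further point your closing paragraph gets exactly right: the later application is to a matrix whose nonzero entries lie in the \emph{union} of $\#E^n_{r\epsilon}$ rows and $\#E^n_{c\epsilon}$ columns, which is not literally a matrix with few nonzero rows and few nonzero columns; what is really needed there is the split into two pieces (one supported on the bad rows, one on the remaining bad columns) followed by two applications of the $\min$ bound and one of Lemma~\ref{L.1.1.5}, which is precisely the remark you make. In that sense your presentation makes the downstream use slightly more transparent than the lemma's literal statement does.
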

\begin{proof} Let $e^i_m\in\Bbb R^m$ be the canonical vector with 1 in the $i-th$ position and zero in the remaining positions, $a_{j\cdot}$ the $j-th$ row of $A$, 
and $a_{\cdot k}$ the $k-th$ column of $A$. Then we can write 
$$A=\sum_{j=1}^re^j_ra_{j\cdot}=\sum_{k=1}^sa_{\cdot k}{e^k_s}^T=\frac12\sum_{j=1}^re^j_ra_{j\cdot}+\frac12\sum_{k=1}^sa_{\cdot k}{e^k_s}^T.$$
Each of the terms in the sums is a $rank$ 1 matrix.  Removing all the zero rows and columns and using Lemma \ref{L.1.1.5} we get our result.
\end{proof}

For Hermitian matrix $A$ we let $F^A$ denote the e.d.f. of the eigenvalues of $A$, and for rectangular matrix $B$ we let $F^B_{sing}$ denote the e.d.f. of the singular values of $B$.

\begin{lemma}[{\cite[Theorem A.44]{bai2010spectral}}]
   \label{L.1.2}
   For $r\times s$ matrices $A$ and $B$
$$\|F^{AA^*}-F^{BB^*}\|\leq \frac1r\text{rank}(A-B).$$
\end{lemma}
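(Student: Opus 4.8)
The plan is to reduce the uniform distance between the two distribution functions to an elementary counting statement about singular values. Write $k=\text{rank}(A-B)$, put $q=\min\{r,s\}$, and let $\sigma_1\ge\cdots\ge\sigma_q$ and $\tau_1\ge\cdots\ge\tau_q$ be the singular values of $A$ and $B$, with the convention $\sigma_j=\tau_j=0$ for $j>q$. For $x<0$ both $F^{AA^*}$ and $F^{BB^*}$ vanish, so only $x\ge 0$ matters. There, the $r\times r$ nonnegative definite matrix $AA^*$ has eigenvalues $\sigma_1^2,\dots,\sigma_q^2$ together with $(r-q)^+$ extra zeros, hence
$$rF^{AA^*}(x)=\#\{i\le q:\sigma_i\le\sqrt x\}+(r-q)^+,$$
and likewise for $B$. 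Since the term $(r-q)^+$ is common to both, it cancels and $r\big(F^{AA^*}(x)-F^{BB^*}(x)\big)=\#\{i\le q:\sigma_i\le\sqrt x\}-\#\{i\le q:\tau_i\le\sqrt x\}$. It therefore suffices to bound the right-hand side by $k$ in absolute value, uniformly in $x$.

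The key input is the rank-perturbation inequality for singular values: if $\text{rank}(A-B)\le k$ then $\sigma_{i+k}\le\tau_i$ and, symmetrically, $\tau_{i+k}\le\sigma_i$, for every $i\ge1$. I would get this from the Eckart--Young characterization $\sigma_j(A)=\min\{\|A-C\|:\text{rank}(C)\le j-1\}$: choosing a matrix $C'$ of rank at most $i-1$ with $\|B-C'\|=\sigma_i(B)$ and setting $C=C'+(A-B)$, one has $\text{rank}(C)\le i+k-1$ and $\|A-C\|=\|B-C'\|=\sigma_i(B)$, so $\sigma_{i+k}(A)\le\sigma_i(B)$; interchanging $A$ and $B$ gives the other bound. (Alternatively this is Weyl's inequality for singular values, e.g.\ as in \cite{horn1990matrix}.)

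Now fix $y=\sqrt x\ge0$ and set $a=\#\{i\le q:\sigma_i\le y\}$, $b=\#\{i\le q:\tau_i\le y\}$. Since the $\sigma_i$ are nonincreasing, $\sigma_i\le y$ exactly for $i\ge q-a+1$; applying $\tau_{i+k}\le\sigma_i$ to those indices gives $\tau_j\le y$ for all $j$ with $q-a+1+k\le j\le q$, so $b\ge a-k$. The symmetric argument yields $a\ge b-k$, hence $|a-b|\le k$, i.e.\ $r\,|F^{AA^*}(x)-F^{BB^*}(x)|\le k$ for all $x$. Taking the supremum over $x$ gives $\|F^{AA^*}-F^{BB^*}\|\le\frac1r\,\text{rank}(A-B)$.

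The argument is entirely elementary; the one place to be careful is the bookkeeping of the $(r-q)^+$ zero eigenvalues of $AA^*$ when $r>s$, and keeping the convention $\sigma_j=\tau_j=0$ for $j>q$ consistent so that the chain $\tau_{i+k}\le\sigma_i$ stays meaningful when $i+k$ exceeds $q$ (in which case the relevant index set is empty and the bound $b\ge a-k$ is trivial). This is precisely Theorem A.44 of \cite{bai2010spectral}, so in the paper it can simply be quoted.
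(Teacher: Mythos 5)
Your proof is correct. The paper itself gives no argument for this lemma --- it is quoted directly as Theorem A.44 of \cite{bai2010spectral} --- and your derivation (reducing $r|F^{AA^*}(x)-F^{BB^*}(x)|$ to the difference of singular-value counting functions, then bounding that by $k=\text{rank}(A-B)$ via the Eckart--Young/Weyl perturbation inequality $\sigma_{i+k}(A)\le\sigma_i(B)$) is essentially the standard proof found in that reference, with the edge cases (the $(r-q)^+$ extra zero eigenvalues, indices exceeding $q$, and $x<0$) handled correctly.
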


Since the rank of a matrix is bounded above by the number of its non-zero rows, we have

\begin{lemma}
  \label{L.1.3}
  For matrices in Lemma \ref{L.1.2}
\begin{multline*}\|F^{AA^*}-F^{BB^*}\|\leq \frac1r\{\text{number of non-zero rows of $A-B$}\}\hfill\\ \hfill\leq\frac1r\{\text{number of non-zero entries of $A-B$}\}.\end{multline*}
\end{lemma}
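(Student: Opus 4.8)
The plan is to obtain the two displayed inequalities by chaining Lemma~\ref{L.1.2} with two elementary facts about ranks. The first inequality is essentially immediate: applying Lemma~\ref{L.1.2} (Theorem~A.44 of \cite{bai2010spectral}) to the pair $A,B$ gives
$$\|F^{AA^*}-F^{BB^*}\|\leq\frac1r\,\text{rank}(A-B),$$
so it suffices to bound $\text{rank}(A-B)$ above by the number of non-zero rows of $A-B$.

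For that bound I would argue directly rather than through Lemma~\ref{Lrowcol}: deleting a zero row of a matrix alters neither its row space nor its rank, so after removing all zero rows of $A-B$ one is left with a matrix whose number of rows equals the number of non-zero rows of $A-B$, and the rank of any matrix is at most its number of rows. Hence $\text{rank}(A-B)\leq\{\text{number of non-zero rows of }A-B\}$, which combined with the previous display gives the first inequality of the lemma. For the second inequality, note that every non-zero row contains at least one non-zero entry and entries in distinct rows occupy distinct positions, so the number of non-zero rows is at most the number of non-zero entries.

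There is no genuine obstacle here; the lemma is a packaging step whose point is the convenient final bound. It is the tool that will later let one pass from $B_n$ to matrices obtained by truncation while changing the e.d.f.\ by a controllably small amount: to absorb the effect of truncating the few large entries of $D_n$ (those lying in the rows $E^n_{r\epsilon}$ or columns $E^n_{c\epsilon}$ excised in forming $\widetilde D_n$), and to absorb the effect of truncating the $o(n)$ large entries of $X_n$ produced by the Lindeberg conditions \eqref{Lind}--\eqref{1.2}, in each case contributing only an $o(1)$ (respectively $\leq\epsilon$) term to $D(F^{B_n},\cdot)$ via \eqref{1.1} and the inequality $D(F,G)\leq\|F-G\|$.
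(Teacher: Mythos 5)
Your argument is correct and matches the paper's: the paper obtains Lemma~\ref{L.1.3} from Lemma~\ref{L.1.2} by exactly the same observation that the rank of a matrix is bounded by its number of non-zero rows, which in turn is bounded by its number of non-zero entries. The only cosmetic difference is that you supply the one-sentence justification (delete zero rows, then rank is at most the number of rows) that the paper leaves implicit.
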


\begin{lemma}[Bernstein's inequality]
  \label{L.1.4}
  Let  $X_1,\ldots,X_n$ denote independent mean zero random variables uniformly bounded in absolute value by b, $S_n=X_1+\ldots+X_n$ and $\sigma_n^2=\EE S_n^2$.  Then for any $\epsilon>0$
$$\P(S_n\ge \epsilon)\leq \exp \left(\frac{-\epsilon^2}{2(\sigma_n^2+\frac{b\epsilon}3)}\right).$$
\end{lemma}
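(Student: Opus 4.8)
The plan is to use the exponential Markov inequality (Chernoff's method). For any $t>0$,
$$\P(S_n\ge\epsilon)\leq e^{-t\epsilon}\,\EE e^{tS_n}=e^{-t\epsilon}\prod_{i=1}^n\EE e^{tX_i},$$
the last equality by independence. Everything then reduces to a good upper bound on each factor $\EE e^{tX_i}$, followed by an optimal choice of $t$.

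For the per-variable bound I would expand the exponential and use that $X_i$ has mean zero together with the moment estimate $|\EE X_i^k|\leq b^{k-2}\EE X_i^2$ for $k\geq2$ (valid since $|X_i|\leq b$). Writing $v_i=\EE X_i^2$, this gives $\EE e^{tX_i}\leq 1+v_i\sum_{k\geq2}t^kb^{k-2}/k!$. The key elementary fact is $k!\geq 2\cdot 3^{k-2}$ for all $k\geq2$ (check $k=2$ directly; for the inductive step, $(k+1)!=(k+1)k!\geq 3\cdot 2\cdot 3^{k-2}=2\cdot 3^{k-1}$), which lets me dominate the tail by a geometric series: for $0<t<3/b$, $\sum_{k\geq2}t^kb^{k-2}/k!\leq \tfrac{t^2}{2}\sum_{j\geq0}(tb/3)^j=\tfrac{t^2}{2(1-tb/3)}$. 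Hence $\EE e^{tX_i}\leq 1+\tfrac{t^2 v_i}{2(1-tb/3)}\leq\exp\!\big(\tfrac{t^2 v_i}{2(1-tb/3)}\big)$, using $1+x\leq e^x$.

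Multiplying over $i$ and using $\sum_i v_i=\sum_i\EE X_i^2=\EE S_n^2=\sigma_n^2$ (independence and mean zero), I get $\EE e^{tS_n}\leq\exp\!\big(\tfrac{t^2\sigma_n^2}{2(1-tb/3)}\big)$, and therefore
$$\P(S_n\ge\epsilon)\leq\exp\!\left(-t\epsilon+\frac{t^2\sigma_n^2}{2(1-tb/3)}\right)\qquad\text{for every }t\in(0,3/b).$$
It remains to choose $t$. Taking $t=\epsilon/(\sigma_n^2+b\epsilon/3)$, which lies in $(0,3/b)$, a short computation gives $1-tb/3=\sigma_n^2/(\sigma_n^2+b\epsilon/3)$, so the quadratic term equals $\tfrac{t^2(\sigma_n^2+b\epsilon/3)}{2}=\tfrac{t\epsilon}{2}$ and the exponent collapses to $-\tfrac{t\epsilon}{2}=-\epsilon^2/\big(2(\sigma_n^2+b\epsilon/3)\big)$, the claimed bound.

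I do not anticipate a genuine obstacle; the only points needing a little care are the combinatorial inequality $k!\geq 2\cdot 3^{k-2}$ and the resulting restriction $t<3/b$, which is harmless since the optimal $t$ automatically satisfies it.
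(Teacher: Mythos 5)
The paper states this lemma as a classical result and gives no proof of its own, so there is nothing to compare against; what matters is only whether your argument is sound, and it is. Your derivation is the standard Chernoff-method proof of Bernstein's inequality: the moment bound $|\EE X_i^k|\leq b^{k-2}\EE X_i^2$, the combinatorial inequality $k!\geq 2\cdot 3^{k-2}$, the geometric-series domination giving $\EE e^{tX_i}\leq\exp\bigl(t^2v_i/(2(1-tb/3))\bigr)$ for $t\in(0,3/b)$, and the choice $t=\epsilon/(\sigma_n^2+b\epsilon/3)$ all check out, and the exponent collapses to exactly the claimed bound. The only (harmless) edge case you skip is $\sigma_n^2=0$, where your optimal $t$ equals $3/b$ and the algebra degenerates; but then $S_n=0$ almost surely and the inequality is trivial.
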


\begin{lemma}[consequence of Burkholder's inequality]
  \label{L.1.5}
  For $\{X_k\}$ independent mean-zero random variables we have for any $p\ge2$
$$\EE\bigl|\sum_kX_k\bigr|^p\leq C_p\bigl(\sum_k \EE|X_k|^p+\bigl(\sum_k\EE|X_k|^2\bigr)^{p/2}\bigr).$$

\end{lemma}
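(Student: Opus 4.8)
The plan is to obtain this bound — a form of Rosenthal's inequality — from Burkholder's martingale inequality together with an induction on $p$ whose inductive step invokes Burkholder once more. We may assume the right-hand side is finite (otherwise there is nothing to prove), so that $\sum_k\EE|X_k|^p<\infty$ and $\sum_k\EE|X_k|^2<\infty$; since $|X_k|^r\le|X_k|^2+|X_k|^p$ for $2\le r\le p$, every intermediate moment sum appearing below converges. Using $\bigl|\sum_kX_k\bigr|^p\le2^{p-1}\bigl(\bigl|\sum_k\Re X_k\bigr|^p+\bigl|\sum_k\Im X_k\bigr|^p\bigr)$ and $\EE|\Re X_k|^r\le\EE|X_k|^r$, $\EE|\Im X_k|^r\le\EE|X_k|^r$ for every $r>0$, it suffices to treat real-valued $X_k$. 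Then $\sum_kX_k$ is a martingale in its natural filtration, and Burkholder's inequality gives $\EE\bigl|\sum_kX_k\bigr|^p\le C_p\,\EE\bigl(\sum_kX_k^2\bigr)^{p/2}$ for $p\ge2$. Writing $Y_k:=X_k^2\ge0$ (independent), $q:=p/2\ge1$, and noting $\EE Y_k^q=\EE|X_k|^p$ and $\bigl(\sum_k\EE Y_k\bigr)^q=\bigl(\sum_k\EE|X_k|^2\bigr)^{p/2}$, the lemma reduces to the claim
\begin{equation}\label{rosnn}
\EE\Bigl(\sum_kY_k\Bigr)^q\le C'_q\Bigl(\sum_k\EE Y_k^q+\bigl(\sum_k\EE Y_k\bigr)^q\Bigr)\qquad(q\ge1,\ Y_k\ge0\text{ independent}).
\end{equation}

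I would prove \eqref{rosnn} by strong induction on the least integer $n\ge0$ with $q\le2^{n+1}$. In every case set $\mu:=\sum_k\EE Y_k$ and $Z_k:=Y_k-\EE Y_k$ (independent, mean zero); convexity of $t\mapsto|t|^q$ gives $\EE\bigl(\sum_kY_k\bigr)^q\le2^{q-1}\bigl(\mu^q+\EE\bigl|\sum_kZ_k\bigr|^q\bigr)$, so only $\EE\bigl|\sum_kZ_k\bigr|^q$ needs bounding. For the base case $1\le q\le2$ ($n=0$), the elementary $L^q$-inequality for sums of independent mean-zero variables with $1\le q\le2$ (von Bahr--Esseen), $\EE\bigl|\sum_kZ_k\bigr|^q\le2\sum_k\EE|Z_k|^q$, combined with $\EE|Z_k|^q\le2^{q-1}\bigl(\EE Y_k^q+(\EE Y_k)^q\bigr)$ and $\sum_k(\EE Y_k)^q\le\mu^q$ (valid as $q\ge1$), closes the argument. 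For $q>2$, Burkholder's inequality applied to the martingale $\sum_kZ_k$ gives $\EE\bigl|\sum_kZ_k\bigr|^q\le C_q\,\EE\bigl(\sum_kZ_k^2\bigr)^{q/2}$; since $1\le q/2\le2^n$, the inductive hypothesis applies to the nonnegative independent variables $Z_k^2$ at exponent $q/2$ and bounds this, up to a constant, by $\sum_k\EE|Z_k|^q+\bigl(\sum_k\EE Z_k^2\bigr)^{q/2}$.

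It then remains to handle $\bigl(\sum_k\EE Z_k^2\bigr)^{q/2}\le\bigl(\sum_k\EE Y_k^2\bigr)^{q/2}$. Here I would use Lyapunov's inequality $\EE Y_k^2\le(\EE Y_k^q)^{1/(q-1)}(\EE Y_k)^{(q-2)/(q-1)}$, valid because $2=\tfrac1{q-1}\cdot q+\tfrac{q-2}{q-1}\cdot1$ with nonnegative weights summing to $1$ when $q\ge2$; Hölder over $k$ with conjugate exponents $q-1$ and $\tfrac{q-1}{q-2}$ then gives $\sum_k\EE Y_k^2\le\bigl(\sum_k\EE Y_k^q\bigr)^{1/(q-1)}\mu^{(q-2)/(q-1)}$, and Young's inequality (with exponents arranged so that the two factors come out to the first and the $q$-th power) yields $\bigl(\sum_k\EE Y_k^2\bigr)^{q/2}\le\sum_k\EE Y_k^q+\mu^q$. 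Feeding this back, together with $\sum_k\EE|Z_k|^q\le2^{q-1}(\sum_k\EE Y_k^q+\mu^q)$ as before, closes the induction with a finite constant $C'_q$, and hence proves the lemma. The main obstacle is purely bookkeeping: one must check that the Burkholder constants $C_q$ and the $C'_q$ stay finite through the finitely many induction steps, that all intermediate moment sums converge (secured by the reduction at the outset), and that the range $1\le q\le2$ — where $\EE Y_k^2$ may be infinite — is treated via von Bahr--Esseen rather than a crude second-moment (Jensen) estimate; it is precisely at the Young step that \emph{both} terms on the right of \eqref{rosnn} are genuinely needed. One can bypass the induction altogether by invoking the sharper Burkholder inequality in terms of the conditional quadratic variation $\sum_k\EE[X_k^2\mid\mathcal F_{k-1}]$ and the term $\EE\max_k|X_k|^p$, which for independent $X_k$ equal $\sum_k\EE X_k^2$ and are at most $\sum_k\EE|X_k|^p$ respectively, giving the result at once; I present the elementary route since it uses only the basic Burkholder inequality.
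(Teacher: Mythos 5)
Your argument is correct. The paper offers no proof of this lemma at all---it is stated as a known fact (Rosenthal's inequality), with the bracketed remark ``consequence of Burkholder's inequality'' serving as the entire justification---so there is nothing to compare line by line; what you have written is the standard derivation that the paper's remark alludes to, and it uses exactly the martingale form of Burkholder's inequality that appears as the next lemma in the paper. The details all check out: the reduction to real, mean-zero summands; the passage from $\EE\bigl|\sum_k X_k\bigr|^p$ to $\EE\bigl(\sum_k X_k^2\bigr)^{p/2}$; the dyadic induction on $q=p/2$ with the von Bahr--Esseen bound in the range $1\le q\le 2$; and the interpolation $\sum_k\EE Y_k^2\le\bigl(\sum_k\EE Y_k^q\bigr)^{1/(q-1)}\mu^{(q-2)/(q-1)}$ followed by the weighted AM--GM step, whose exponents $\tfrac{q}{2(q-1)}+\tfrac{q-2}{2(q-1)}=1$ do sum correctly. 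Your closing observation---that the sharper Burkholder--Rosenthal inequality involving the conditional square function gives the result in one line for independent summands---is also accurate and would be the quickest citation-level route; either way the lemma is established.
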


\begin{lemma}[\cite{bai1998no}]
  \label{L.1.6}
   Let $\{X_k\}$ be a complex martingale difference sequence.  Then, for $p>1$
$$\EE\biggl|\sum X_k\biggr|^p\leq C_p\EE\biggl(\sum|X_k|^2\biggr)^{p/2}.$$
\end{lemma}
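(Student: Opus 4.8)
\textbf{Proof plan for Theorem \ref{1.1}.}

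The strategy is a truncation-and-comparison argument in four stages, using the rank and singular-value perturbation lemmas of Section 2 to control the cost of each replacement, and then invoking Girko's Theorem 10.1 (applied to a bounded $D$-matrix) as a black box for the final limiting statement. Fix $\epsilon>0$ throughout, and recall the sets $E^n_{r\epsilon}, E^n_{c\epsilon}$ furnished by the tightness-like hypothesis on $D_n$, with $\#E^n_{r\epsilon}+\#E^n_{c\epsilon}\le\epsilon n$ and $d^n_{jk}\le M_\epsilon$ off the ``bad'' rows and columns.

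\emph{Step 1: replace $D_n$ by the truncated matrix $\widetilde D_n$.} Put $\widetilde B_n=\frac1N(\widetilde D_n\circ X_n)(\widetilde D_n\circ X_n)^*$. The matrix $(D_n-\widetilde D_n)\circ X_n$ has nonzero entries only where $d^n_{jk}>d_\epsilon\ge M_\epsilon$, hence — by property (2) of the hypothesis — only in rows indexed by $E^n_{r\epsilon}$ or columns indexed by $E^n_{c\epsilon}$. By Lemma \ref{Lrowcol} its rank is at most $\#E^n_{r\epsilon}+\#E^n_{c\epsilon}\le\epsilon n$, so Lemma \ref{L.1.2} gives $\|F^{B_n}-F^{\widetilde B_n}\|\le\epsilon$ deterministically (for all $n$ large). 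Via the bound $D(F,G)\le\|F-G\|$ recorded after Lemma \ref{L.1.1}, this already accounts for the entire $\epsilon$ on the right side of \eqref{epresult}; every subsequent replacement must therefore cost $o(1)$ almost surely.

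\emph{Step 2: truncate and renormalize the entries of $X_n$.} Using the Lindeberg hypothesis \eqref{Lind} and the sequence $\{\eta_n\}$ satisfying \eqref{1.2}, set $\widehat x_{jk}=x_{jk}I(|x_{jk}|\le\eta_n\sqrt n)$, then center and rescale to unit variance to get $\check x_{jk}$ (this is where \eqref{1.3} is used: it guarantees the variances stay bounded below, so the rescaling factors are bounded). Let $\widehat B_n,\check B_n$ be the corresponding matrices. The difference $\widetilde D_n\circ(X_n-\widehat X_n)$ has, by Lemma \ref{L.1.3}, at most $\#\{(j,k):|x_{jk}|>\eta_n\sqrt n\}$ nonzero entries; by \eqref{1.2} and a Bernstein/Markov argument the number of such entries is $o(n)$ almost surely, so Lemma \ref{L.1.3} yields $\|F^{\widetilde B_n}-F^{\widehat B_n}\|\to0$ a.s. The passage from $\widehat B_n$ to $\check B_n$ changes each entry by a centering shift of size $O\!\big(\EE|x_{jk}|I(|x_{jk}|>\eta_n\sqrt n)\big)$ plus a multiplicative factor $1+o(1)$; here I would use Lemma \ref{L.1.1} (the Frobenius singular-value bound) together with \eqref{1.1} to control $D(F^{\widehat B_n},F^{\check B_n})$ by the Frobenius norm of the difference of the two factor matrices, divided by $\sqrt n$, which is $o(1)$ because the truncated entries are now bounded by $\eta_n\sqrt n$ and $\eta_n\downarrow0$, together with the crude spectral-norm bound on $\check D_n\circ\check X_n$.

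\emph{Step 3: apply Girko's theorem to $\check B_n$.} The matrix $\check X_n$ now has independent, standardized entries bounded by $C\eta_n\sqrt n$, and $\widetilde D_n$ has all entries $\le d_\epsilon$, so \eqref{davbd} holds (the row/column averages of $\widetilde d^2$ are $\le d_\epsilon^2\cdot\max(1,\limsup n/N)$) and the Lindeberg condition in Theorem 10.1 of Girko is implied by $|\check x_{jk}|\le C\eta_n\sqrt n$ with $\eta_n\to0$. (The reduction to real entries, if needed, is standard: pass to the $2n\times 2N$ real representation, or verify that Girko's argument goes through verbatim in the complex standardized case.) Thus with probability one $\|F^{\check B_n}-F^0_{n,\epsilon}\|\to0$, where $F^0_{n,\epsilon}$ has Stieltjes transform \eqref{epST} with the $e^0_k$ solving \eqref{epe0def}, the system built from $\widetilde d_{ik}$. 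Note these equations depend only on $\widetilde D_n$, not on the truncation of $X_n$, since truncating $X_n$ does not alter the deterministic equivalent.

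\emph{Step 4: assemble.} Combining the triangle inequality for $D$ with Steps 1--3, $\limsup_n D(F^{B_n},F^0_{n,\epsilon})\le\epsilon+0+0+0=\epsilon$ almost surely, which is \eqref{epresult}. The corollary then follows by a diagonalization over a countable sequence $\epsilon\downarrow0$: on the probability-one event where \eqref{epresult} holds simultaneously for all $\epsilon=1/m$, choose $\epsilon_n$ decreasing slowly enough that $D(F^{B_n},F^0_{n,\epsilon_n})\to0$.

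\emph{Main obstacle.} The delicate point is Step 2, specifically passing from $\widehat B_n$ to the variance-one matrix $\check B_n$ while keeping the cost $o(1)$ \emph{almost surely}, not just in expectation: the rank/row-count lemmas no longer apply (the difference matrix is full), so one must fall back on the Frobenius bound of Lemma \ref{L.1.1}, and controlling $\frac1n\|\widehat D_n\circ\widehat X_n-\check D_n\circ\check X_n\|_2^2$ requires the lower variance bound \eqref{1.3} (to keep rescaling factors bounded) together with a concentration argument showing this Frobenius norm is close to its mean. A secondary subtlety is confirming that Girko's Theorem 10.1, as stated for $x_{ij}\in\Bbb R$, transfers to the complex standardized setting — most likely handled by the real $2n\times2N$ embedding, at the cost of checking that the deterministic equivalent is unchanged.
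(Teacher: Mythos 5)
Your proposal does not address the statement at hand. The statement is Lemma \ref{L.1.6}, the moment inequality for a complex martingale difference sequence $\{X_k\}$: for $p>1$, $\EE\bigl|\sum X_k\bigr|^p\leq C_p\EE\bigl(\sum|X_k|^2\bigr)^{p/2}$. This is a Burkholder-type square-function inequality, which the paper does not prove but cites from Bai and Silverstein (1998); a self-contained proof would go through the Burkholder--Davis--Gundy inequality (or, for $p\geq2$, an induction on $p$ via the orthogonality of martingale increments and H\"older's inequality), none of which appears in your write-up. What you have written instead is a proof plan for Theorem \ref{1.1}, the main theorem. That is a different statement entirely, so as a proof of Lemma \ref{L.1.6} the proposal is vacuous.

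Even read as a sketch of Theorem \ref{1.1}, your route diverges substantially from the paper's: you propose to invoke Girko's Theorem 10.1 as a black box in Step 3, whereas the paper never does this --- after the truncation/centralization/rescaling steps (which do match the paper's Steps 1--2 fairly closely, including the use of Lemmas \ref{L.1.2}, \ref{Lrowcol}, \ref{L.1.3}, \ref{L.1.1} and the variance lower bound \eqref{1.3}), it proves the deterministic equivalent from scratch: a martingale decomposition of $m_n(z)-\EE m_n(z)$ (which is precisely where Lemma \ref{L.1.6} is used), the identity \eqref{1.5} comparing $(B_n-zI)^{-1}$ with $(F-zI)^{-1}$, existence and uniqueness of the solutions $e_k^0$ via the spectral-radius argument on the nonnegative matrix $C^0$, and the replacement of $e_k$ by $e_k^0$ via the bound $\rho(A)<1$. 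Your concern about transferring Girko's real-entry hypothesis to the complex case is therefore moot in the paper's argument, but it would be a genuine obstruction in yours, since you give no verification that the $2n\times 2N$ real embedding preserves the deterministic equivalent \eqref{epST}--\eqref{epe0def}.
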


\begin{lemma}
  \label{L.1.7}
  Let $B,C$ be $n\times n$ with $B$ Hermitian, $x\in\Bbb C^n$ and $z=x+iv$ with $v>0$.  Then
$$\frac1{|z(1+x^*(B-zI)^{-1}x)|}\leq\frac1v\quad\text{and}\quad\frac1{|z(1+\tr C^*(B-zI)^{-1}C)|}\leq\frac1v.$$

\end{lemma}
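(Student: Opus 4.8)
The plan is to diagonalize $B$ and reduce both inequalities to the elementary observation that $|w|\ge\Im w$ for every $w\in\Bbb C$. I would write the spectral decomposition $B=\sum_{k=1}^n\lambda_ke_ke_k^*$, where $\{e_k\}_{k=1}^n$ is an orthonormal basis of $\Bbb C^n$ and the $\lambda_k$ are the (real, and in every application of the lemma nonnegative) eigenvalues of $B$. Since $z=u+iv$ with $v>0$ gives $\lambda_k-z\ne0$ for all $k$, the resolvent is $(B-zI)^{-1}=\sum_{k=1}^n(\lambda_k-z)^{-1}e_ke_k^*$, which is all the structure that is needed.

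For the first inequality, expand
$$z\bigl(1+x^*(B-zI)^{-1}x\bigr)=z+\sum_{k=1}^n\frac{z\,|e_k^*x|^2}{\lambda_k-z}.$$
Writing $z/(\lambda_k-z)=(z\lambda_k-|z|^2)/|\lambda_k-z|^2$ and taking imaginary parts (recall $\Im z=v$) gives
$$\Im\Bigl(z\bigl(1+x^*(B-zI)^{-1}x\bigr)\Bigr)=v\Bigl(1+\sum_{k=1}^n\frac{\lambda_k\,|e_k^*x|^2}{|\lambda_k-z|^2}\Bigr)\ge v,$$
the last step because $\lambda_k\ge0$ and $|e_k^*x|^2\ge0$ for every $k$. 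Hence $\bigl|z(1+x^*(B-zI)^{-1}x)\bigr|\ge v$, which is the first claim. For the second inequality, cyclicity of the trace gives $\tr C^*(B-zI)^{-1}C=\tr\bigl((B-zI)^{-1}CC^*\bigr)=\sum_{k=1}^n\|C^*e_k\|^2/(\lambda_k-z)$, so the identical computation applies with the nonnegative weights $\|C^*e_k\|^2$ in place of $|e_k^*x|^2$, yielding $\bigl|z(1+\tr C^*(B-zI)^{-1}C)\bigr|\ge v$.

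I do not expect a genuine obstacle here: once $B$ is diagonalized the estimate is a one-line computation. The only point that must be watched is the sign of the sum appearing inside the imaginary part — it is nonnegative precisely because the eigenvalues of $B$ are nonnegative, which is where nonnegative-definiteness of $B$ (rather than mere Hermiticity) enters the argument.
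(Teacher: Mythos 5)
Your proof is correct and follows essentially the same route as the paper's: the paper's one-line argument is that $\Im\bigl(x^*((1/z)B-I)^{-1}x\bigr)\ge0$ and $\Im\bigl(\tr C^*((1/z)B-I)^{-1}C\bigr)\ge0$, and since $z(B-zI)^{-1}=((1/z)B-I)^{-1}$ this gives $\Im\bigl(z(1+x^*(B-zI)^{-1}x)\bigr)\ge v$, hence the bound; your spectral decomposition is simply the explicit verification of that assertion. Your closing remark is on target and worth recording: the inequality genuinely requires $\lambda_k\ge0$, i.e.\ $B$ nonnegative definite, not merely Hermitian (e.g.\ $n=1$, $B=-1$, $x=1$, $z=i$ gives $|z(1+x^*(B-zI)^{-1}x)|=1/\sqrt2<v$), so the hypothesis as printed is slightly understated, though in every application in the paper $B$ is one of the nonnegative definite matrices $B_{(k)}$, so no harm results.
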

\begin{proof}
 Follows from the fact that the imaginary parts of $x^*((1/z)B-I)^{-1}x$ and $\tr C^*((1/z)B-I)^{-1}C$ are both non-negative.
\end{proof}

\begin{lemma}[\cite{bai1998no}]
  \label{L.1.8}
  Let $z\in\Bbb C$ with $v=\Im z>0$, $A$, $B$ $n\times n$ with $B$ Hermitian, and $r\in\Bbb C^n$.  Then
$$|\tr((B-zI)^{-1}-(B+rr^*-zI)^{-1})A|=\left|\frac{r^*(B-zI)^{-1}A(B-zI)^{-1}r}{1+r^*(B-zI)^{-1}r}\right|\leq\frac{\vertiii{A}_2}v,$$
where $\vertiii{\cdot}_2$ denotes spectral norm.

\end{lemma}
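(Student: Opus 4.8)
The plan is to obtain the displayed equality from the Sherman--Morrison rank-one inverse update, and then to bound the resulting scalar ratio by handling numerator and denominator separately. Throughout write $C = (B-zI)^{-1}$; the key structural fact is that because $B$ is Hermitian, $B - zI$ is normal, hence $C$ is normal, so that $\|Cr\| = \|C^*r\|$ for every vector $r$.

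For the identity: $B + rr^* - zI$ is a rank-one perturbation of the invertible matrix $B - zI$, so the Sherman--Morrison formula gives
\[
(B+rr^*-zI)^{-1} = C - \frac{Crr^*C}{1+r^*Cr}.
\]
Subtracting this from $C$, multiplying by $A$, taking the trace, and using the cyclic property together with $\tr(Crr^*CA) = r^*CACr$ produces the asserted equality. (That the scalar $1+r^*Cr$ is nonzero is checked in the next step.)

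For the bound I would estimate the two factors as follows. Numerator: writing $r^*CACr = (C^*r)^*A(Cr)$ and applying Cauchy--Schwarz with the definition of the spectral norm gives $|r^*CACr| \le \|C^*r\|\,\vertiii{A}_2\,\|Cr\|$, which by normality of $C$ equals $\vertiii{A}_2\,\|Cr\|^2$. Denominator: from the resolvent identity $C - C^* = C\bigl((B - \bar z I) - (B - zI)\bigr)C^* = 2iv\,CC^*$ one reads off $\Im(r^*Cr) = v\,r^*CC^*r = v\,\|C^*r\|^2 = v\,\|Cr\|^2$; since $v > 0$ this is strictly positive whenever $r \neq 0$, so $1+r^*Cr \neq 0$, and
\[
|1+r^*Cr| \ge \Im(1+r^*Cr) = \Im(r^*Cr) = v\,\|Cr\|^2.
\]
Dividing, the common factor $\|Cr\|^2$ cancels and leaves the bound $\vertiii{A}_2/v$; the degenerate case $r = 0$ makes both sides of the inequality trivial.

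There is no real obstacle in this argument; the only point meriting care is the matching of the two norm factors. Cauchy--Schwarz on the numerator naturally produces $\|C^*r\|\,\|Cr\|$, whereas the imaginary part of the denominator produces $\|C^*r\|^2$, and it is exactly the normality of $C$ (equivalently, the Hermiticity of $B$) that forces these to coincide, so that the dependence on $r$ drops out and the bound holds uniformly in $r$.
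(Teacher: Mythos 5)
Your proof is correct, and it is essentially the standard argument from the cited reference [Bai and Silverstein, 1998, Lemma 2.6]; the paper itself gives no proof, simply quoting that source. The Sherman--Morrison identity, the Cauchy--Schwarz bound on the numerator, and the observation that $\Im(r^*(B-zI)^{-1}r)=v\,\|(B-\bar zI)^{-1}r\|^2$ matches the numerator's norm factors via normality of the resolvent are exactly the ingredients of the original proof, so there is nothing further to add.
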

\begin{lemma}
  \label{L.1.9}
  For $A$, $r$ as in Lemma \ref{L.1.8} with $A$ and $A+rr^*$ both invertible, we have
$$r^*(A+rr^*)^{-1}=\frac1{1+r^*A^{-1}r}r^*A^{-1}.$$
Proof.  Follows from $r^*A^{-1}(A+rr^*)=(1+r^*A^{-1}r)r^*$.
\end{lemma}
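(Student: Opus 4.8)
The claim is the Sherman--Morrison-type identity
$$r^*(A+rr^*)^{-1}=\frac1{1+r^*A^{-1}r}\,r^*A^{-1},$$
valid whenever $A$ and $A+rr^*$ are both invertible. The plan is simply to verify the hint supplied in the statement, namely that $r^*A^{-1}(A+rr^*)=(1+r^*A^{-1}r)r^*$, and then divide by the (scalar) invertible factor on the right of $(A+rr^*)$ only after right-multiplying by $(A+rr^*)^{-1}$. Concretely, I would start from the left-hand expression $r^*A^{-1}(A+rr^*)$, distribute to get $r^*A^{-1}A + r^*A^{-1}rr^* = r^* + (r^*A^{-1}r)\,r^*$, and observe that $r^*A^{-1}r$ is a scalar (a $1\times1$ matrix), so this equals $(1+r^*A^{-1}r)r^*$, as asserted.

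Next I would right-multiply both sides of $r^*A^{-1}(A+rr^*)=(1+r^*A^{-1}r)r^*$ by $(A+rr^*)^{-1}$, which exists by hypothesis. The left side collapses to $r^*A^{-1}$, and the right side becomes $(1+r^*A^{-1}r)\,r^*(A+rr^*)^{-1}$. This gives
$$r^*A^{-1}=(1+r^*A^{-1}r)\,r^*(A+rr^*)^{-1}.$$
The only remaining point is that the scalar $1+r^*A^{-1}r$ is nonzero, so that we may divide by it to isolate $r^*(A+rr^*)^{-1}$ and reach the desired formula. This nonvanishing is the one place where a brief argument is needed rather than pure algebra: if $1+r^*A^{-1}r=0$, then the displayed equation forces $r^*A^{-1}=0$, hence $r^*=0$ (as $A^{-1}$ is invertible), and then $1+r^*A^{-1}r=1\neq0$, a contradiction. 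Therefore $1+r^*A^{-1}r\neq0$, and dividing yields the claim.

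There is essentially no obstacle here: the entire content is the elementary observation that $rr^*$ is rank one so that $r^*A^{-1}r$ behaves as a scalar, together with the cancellation of $(A+rr^*)^{-1}$ against $(A+rr^*)$. The only subtlety worth flagging is the nonvanishing of $1+r^*A^{-1}r$, which is immediate from the invertibility of $A+rr^*$ as shown above; alternatively one can cite the standard matrix determinant lemma $\det(A+rr^*)=(1+r^*A^{-1}r)\det A$, which makes the nonvanishing transparent once $A$ and $A+rr^*$ are invertible. Since the hint in the statement already carries the key step, the proof is a one-line verification followed by this short division argument.
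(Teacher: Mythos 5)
Your proof is correct and is exactly the paper's argument: the paper's one-line proof is precisely the identity $r^*A^{-1}(A+rr^*)=(1+r^*A^{-1}r)r^*$, which you verify and then unwind by multiplying by $(A+rr^*)^{-1}$ and dividing by the scalar. Your added check that $1+r^*A^{-1}r\neq0$ is a worthwhile detail the paper leaves implicit, but it does not change the route.
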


\begin{lemma}[{\cite[Lemma B.26]{bai2010spectral}}]
  \label{L.1.10}
  Let $A$ be $n\times n$ and $x=(x_1,\ldots,x_n)^T$ where the $x_i$ are independent random variables with $\EE x_i=0$, $\EE|x_i|^2=1$, and $\EE|x_i|^{\ell}\leq\nu_{\ell}$.  Then for any $p\ge1$
$$\EE|x^*Ax-\tr A|^p\leq C_p\left((\nu_4\tr(AA^*))^{p/2}+\nu_{2p}\tr(AA^*)^{p/2}\right).$$
\end{lemma}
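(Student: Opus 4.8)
The plan is to split $x^{*}Ax-\tr A$ into its diagonal and off-diagonal parts and bound each by a Burkholder-type inequality; the off-diagonal part is reduced to a nonnegative quadratic form $x^{*}Mx$ built from the triangular parts of $A$, which is then controlled by an induction on the exponent $p$. Since for $p=1$ Cauchy--Schwarz gives $\EE|Z|\le(\EE|Z|^{2})^{1/2}$, reducing the claim to the $p=2$ case, we may assume $p\ge 2$ throughout, and we write
$$x^{*}Ax-\tr A=\sum_{i}(|x_{i}|^{2}-1)a_{ii}+\sum_{i\ne j}\bar x_{i}a_{ij}x_{j}=:Y_{1}+Y_{2},$$
so that it suffices to bound $\EE|Y_{1}|^{p}$ and $\EE|Y_{2}|^{p}$ separately.

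The diagonal term $Y_{1}$ is a sum of independent mean-zero random variables, so Lemma~\ref{L.1.5} bounds $\EE|Y_{1}|^{p}$ by a constant $C_{p}$ times $\sum_{i}|a_{ii}|^{p}\,\EE||x_{i}|^{2}-1|^{p}+(\sum_{i}|a_{ii}|^{2}\,\EE||x_{i}|^{2}-1|^{2})^{p/2}$. Using $\EE||x_{i}|^{2}-1|^{p}\le 2^{p}\nu_{2p}$ (convexity together with $\nu_{2p}\ge 1$), $\EE||x_{i}|^{2}-1|^{2}=\EE|x_{i}|^{4}-1\le\nu_{4}$, the inequality $\sum_{i}|a_{ii}|^{p}\le(\sum_{i}|a_{ii}|^{2})^{p/2}$ valid for $p\ge 2$, and $\sum_{i}|a_{ii}|^{2}\le\tr(AA^{*})$, one gets $\EE|Y_{1}|^{p}\le C_{p}[\nu_{2p}(\tr(AA^{*}))^{p/2}+(\nu_{4}\tr(AA^{*}))^{p/2}]$, already of the asserted form.

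For the off-diagonal term, decompose $Y_{2}$ along the filtration $\mathcal F_{k}=\sigma(x_{1},\dots,x_{k})$. A term $\bar x_{i}a_{ij}x_{j}$ with $i\ne j$ has conditional expectation zero as soon as one of its indices exceeds $k$, so the martingale increment is $\gamma_{k}=\bar x_{k}u_{k}+x_{k}v_{k}$ with $u_{k}=\sum_{j<k}a_{kj}x_{j}$, $v_{k}=\sum_{i<k}a_{ik}\bar x_{i}$, both $\mathcal F_{k-1}$-measurable, and $\EE[\gamma_{k}\mid\mathcal F_{k-1}]=0$. Lemma~\ref{L.1.6} gives $\EE|Y_{2}|^{p}\le C_{p}\,\EE(\sum_{k}|\gamma_{k}|^{2})^{p/2}$, and from $|\gamma_{k}|^{2}\le 2|x_{k}|^{2}(|u_{k}|^{2}+|v_{k}|^{2})$ together with $|x_{k}|^{2}=1+(|x_{k}|^{2}-1)$ one obtains $\sum_{k}|\gamma_{k}|^{2}\le 2(x^{*}Mx+R)$, where $M=L^{*}L+UU^{*}\ge 0$ with $L,U$ the strict lower and upper triangular parts of $A$ (so $\tr M=\sum_{i\ne j}|a_{ij}|^{2}\le\tr(AA^{*})$ and $\tr(M^{2})\le(\tr M)^{2}\le(\tr(AA^{*}))^{2}$), while $R=\sum_{k}(|x_{k}|^{2}-1)(|u_{k}|^{2}+|v_{k}|^{2})$ is again a martingale difference sequence. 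The $R$-contribution, treated by a further application of Lemma~\ref{L.1.6} and of Lemma~\ref{L.1.5} to $u_{k},v_{k}$ (using $\EE|x_{k}|^{2p}\le\nu_{2p}$ and $\sum_{k}(\sum_{j<k}|a_{kj}|^{2})^{p/2}\le(\tr(AA^{*}))^{p/2}$), yields $C_{p}\nu_{2p}(\tr(AA^{*}))^{p/2}$. Finally $\EE(x^{*}Mx)^{p/2}\le 2^{p/2-1}(\EE|x^{*}Mx-\tr M|^{p/2}+(\tr M)^{p/2})$; the last summand is $\le(\tr(AA^{*}))^{p/2}$, and $\EE|x^{*}Mx-\tr M|^{p/2}$ is estimated by the statement of the lemma itself with the strictly smaller exponent $p/2$ and matrix $M$, whose bound -- via $\tr(M^{2})\le(\tr(AA^{*}))^{2}$ and the fact that the produced moment factors $\nu_{4},\nu_{p}$ are dominated by $\nu_{4}^{p/2}$ or $\nu_{2p}$ -- is again $\le C_{p}[(\nu_{4}\tr(AA^{*}))^{p/2}+\nu_{2p}(\tr(AA^{*}))^{p/2}]$. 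Halving the exponent lands in $[1,2)$ after finitely many steps, where one falls back on the $p=2$ case -- the direct variance estimate $\EE|x^{*}Ax-\tr A|^{2}\le\nu_{4}\sum_{i}|a_{ii}|^{2}+2\sum_{i\ne j}|a_{ij}|^{2}$, obtained by expanding the square and using $|\EE x_{i}^{2}|\le 1$ -- so the induction closes.

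The step requiring the most care is this recursion for $Y_{2}$: one must verify that every moment factor generated along the way is dominated by $\nu_{4}$ or $\nu_{2p}$ -- using $1=\EE|x_{i}|^{2}\le\nu_{4}$, $\nu_{q}\le\nu_{2p}$ for $q\le 2p$, and $(\EE|x_{i}|^{q})^{2}\le\EE|x_{i}|^{2q}$ -- and that every matrix factor is a fixed power of $\tr(AA^{*})$ -- using $\tr(M^{2})\le(\tr M)^{2}$ and the $\ell^{p}\le\ell^{2}$ inequalities for the diagonal and for the triangular row sums -- so that the constants $C_{p}$ obey a convergent recursion; if the sharper second term $\tr((AA^{*})^{p/2})$ is intended, one additionally invokes the majorization of the diagonal entries of a matrix by its singular values. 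An alternative that avoids the second use of Lemma~\ref{L.1.6} is to decouple $Y_{2}=\sum_{i}\bar x_{i}(A_{0}x)_{i}$ with $A_{0}=A-\operatorname{diag}(A)$, replacing one factor $x$ by an independent copy $\tilde x$ at the cost of a universal constant (by the standard decoupling inequality), applying Lemma~\ref{L.1.5} conditionally on $\tilde x$ to get $C_{p}\nu_{2p}(\tilde x^{*}A_{0}^{*}A_{0}\tilde x)^{p/2}$, and then treating that quadratic form by the same induction.
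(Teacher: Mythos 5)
The paper does not prove this lemma; it is quoted verbatim from Bai and Silverstein (2010), Lemma~B.26, and is used as a black box. So there is no internal proof to compare against, but your reconstruction can be assessed on its own.

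Your outline is correct and follows the standard line of argument behind this Rosenthal-type quadratic-form inequality. The diagonal/off-diagonal split into $Y_1+Y_2$, the use of Lemma~\ref{L.1.5} for the diagonal part with $\EE\bigl||x_i|^2-1\bigr|^p\le C_p\nu_{2p}$ and $\EE\bigl||x_i|^2-1\bigr|^2\le\nu_4$, the martingale increment $\gamma_k=\bar x_k u_k+x_k v_k$ with $u_k,v_k$ built from the strict triangular parts, and the reduction $\sum_k|\gamma_k|^2\le 2\bigl(x^*Mx+R\bigr)$ with $M=L^*L+UU^*\succeq 0$ followed by recursion on the halved exponent are all sound. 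You correctly close the base case with the elementary $p=2$ variance computation (noting $|\EE x_i^2|\le 1$ and $\nu_4\ge 1$), and you correctly flag the two genuinely delicate points: first, to get the sharper second term $\tr\bigl((AA^*)^{p/2}\bigr)$ rather than $\bigl(\tr AA^*\bigr)^{p/2}$ one must invoke the weak majorization of $\bigl(|a_{ii}|\bigr)$ by the singular values of $A$; and second, one must track the moment factors through the recursion, using $\nu_q\le\nu_{2p}$ for $q\le 2p$, $\nu_q^2\le\nu_{2q}$, $\nu_4\ge 1$, and $\tr(M^2)\le(\tr M)^2$, so that the constants stay of the advertised form $C_p\bigl((\nu_4\cdot)^{p/2}+\nu_{2p}\cdot\bigr)$. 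The decoupling alternative you mention at the end is also a legitimate shortcut. In short, the proof is correct modulo routine bookkeeping that you have identified but not fully carried out; there is no gap in the argument.
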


\begin{lemma}
[{\cite[Theorem 8.3.1]{horn1990matrix}}]
   \label{L.1.11}
   Let $\rho(C)$ denote the spectral radius of the $N\times N$ matrix $C$ (the largest of the absolute values of the eigenvalues of C).   If $C$ contains only nonnegative entries, then $\rho(C)$ is an eigenvalue of $C$ having an eigenvector with nonnegative entries.
\end{lemma}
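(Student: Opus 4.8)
The plan is to deduce the statement from the classical Perron theorem for strictly positive matrices and then to remove the positivity hypothesis by a perturbation argument. So I would first treat the case in which every entry of $C$ is strictly positive. Let $\Delta=\{x\in\Bbb R^N:x_i\ge0,\ \sum_ix_i=1\}$, a compact convex set. Since $Cx$ has strictly positive coordinates whenever $x\in\Delta$, the map $x\mapsto Cx/\|Cx\|_1$ is a continuous self-map of $\Delta$, and Brouwer's fixed point theorem yields $u\in\Delta$ with $Cu=\mu u$, where $\mu=\|Cu\|_1>0$ and $u=\mu^{-1}Cu$ has all coordinates strictly positive. Applying the same construction to $C^T$ produces a strictly positive column vector $v$ and $\nu>0$ with $v^TC=\nu v^T$; from $\nu(v^Tu)=v^TCu=\mu(v^Tu)$ and $v^Tu>0$ one gets $\nu=\mu$. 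For an arbitrary eigenvalue $\lambda$ of $C$ with eigenvector $y$, the triangle inequality gives $|\lambda|\,|y|\le C|y|$ coordinatewise, where $|y|$ is the vector of moduli of the entries of $y$; hence $|\lambda|\,(v^T|y|)\le v^TC|y|=\mu\,(v^T|y|)$ with $v^T|y|>0$, so $|\lambda|\le\mu$. Since $\mu$ is itself an eigenvalue, this shows $\mu=\rho(C)$, which proves the lemma when $C>0$.

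For a general nonnegative $C$, let $J$ denote the $N\times N$ matrix all of whose entries equal $1$, set $C_\delta=C+\delta J>0$ for $\delta>0$, and let $u_\delta\in\Delta$ be the associated Perron eigenvector, so that $C_\delta u_\delta=\rho(C_\delta)u_\delta$. Since the eigenvalues of a matrix depend continuously on its entries, $\rho(C_\delta)\to\rho(C)$ as $\delta\downarrow0$; the inclusion $0\le C^k\le C_\delta^k$ together with Gelfand's formula gives moreover $\rho(C)\le\rho(C_\delta)$, but only the continuity is needed below. By compactness of $\Delta$ choose $\delta_k\downarrow0$ with $u_{\delta_k}\to u^*\in\Delta$; letting $k\to\infty$ in $C_{\delta_k}u_{\delta_k}=\rho(C_{\delta_k})u_{\delta_k}$ gives $Cu^*=\rho(C)u^*$ with $u^*\in\Delta$, in particular $u^*\neq0$ and all coordinates of $u^*$ nonnegative. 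This is exactly the assertion.

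The only delicate point is the convergence $\rho(C_\delta)\to\rho(C)$: the spectral radius is the maximum of the moduli of the eigenvalues, and the eigenvalues, as an unordered multiset, vary continuously with the matrix entries, so this is standard; nonetheless one should cite or briefly justify it, since the final eigenvalue identity rests entirely on $\rho(C_{\delta_k})$ converging to $\rho(C)$. If one prefers to avoid Brouwer's theorem in the strictly positive case, the eigenvector $u$ can instead be extracted from the Collatz--Wielandt characterization $\rho(C)=\max_{x\in\Delta}\min_{i:\,x_i>0}(Cx)_i/x_i$ by verifying that a maximizer is a (positive) eigenvector; the perturbation step of the second paragraph is unchanged.
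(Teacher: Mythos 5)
Your argument is correct. Note first that the paper does not prove this lemma at all; it is cited verbatim as Theorem~8.3.1 of Horn and Johnson, so there is no in-paper proof to compare against. That said, the textbook proof the paper relies on has the same two-step architecture you use: establish the Perron theorem for strictly positive matrices, then obtain the nonnegative case by writing $C$ as a limit of positive matrices $C+\delta J$ and extracting a convergent subsequence of normalized Perron vectors, using continuity of $\rho$. Your treatment of the positive case via Brouwer's fixed point theorem on the simplex is a well-known alternative to Horn and Johnson's Collatz--Wielandt variational argument, and you even flag that substitution yourself, so in substance you have reproduced the standard proof. The one place where a referee might ask for more is the claim that $\rho$ depends continuously on the matrix: you correctly identify this as the crux of the limiting step, and the justification (roots of the characteristic polynomial depend continuously on the coefficients, hence so does the maximum of their moduli) is standard and adequate; the auxiliary monotonicity $\rho(C)\le\rho(C_\delta)$ via Gelfand's formula, which you mention but correctly say is not needed, is also fine. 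No gaps.
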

\begin{lemma}
[{\cite[Theorem 8.1.18]{horn1990matrix}}]
  \label{L.1.12}
  Suppose $A=(a_{ij})$ and $B=(b_{ij})$ are $N\times N$ with $b_{ij}$ nonnegative and $|a_{ij}|\leq b_{ij}$. Then
$$\rho(A)\leq\rho((|a_{ij}|))\leq\rho(B).$$
\end{lemma}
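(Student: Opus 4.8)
The plan is to prove the two inequalities in turn, in each case reducing to the Perron–Frobenius-type statement already available as Lemma~\ref{L.1.11}.

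\emph{First inequality, $\rho(A)\le\rho((|a_{ij}|))$.} Let $\lambda$ be an eigenvalue of $A$ with $|\lambda|=\rho(A)$, and let $x=(x_1,\ldots,x_N)^T\ne0$ satisfy $Ax=\lambda x$. Taking absolute values componentwise in the identity $\lambda x_i=\sum_j a_{ij}x_j$ and using the triangle inequality gives
$$\rho(A)\,|x_i|=|\lambda|\,|x_i|=\Bigl|\sum_j a_{ij}x_j\Bigr|\le\sum_j|a_{ij}|\,|x_j|,$$
i.e. the nonnegative nonzero vector $y:=(|x_1|,\ldots,|x_N|)^T$ satisfies $(|a_{ij}|)\,y\ge\rho(A)\,y$ entrywise. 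It therefore suffices to show that if a nonnegative matrix $C$ and a nonzero nonnegative vector $y$ satisfy $Cy\ge ty$ (entrywise) for some real $t\ge0$, then $\rho(C)\ge t$. The main obstacle is exactly this step, since $y$ need not be an eigenvector; I would handle it using Lemma~\ref{L.1.11}. Let $w$ be a nonnegative left Perron eigenvector of $C$, i.e. $w^TC=\rho(C)\,w^T$ with $w\ge0$, $w\ne0$ (apply Lemma~\ref{L.1.11} to $C^T$, whose spectral radius equals that of $C$). Then
$$\rho(C)\,(w^Ty)=w^T(Cy)\ge t\,(w^Ty).$$
If $w^Ty>0$ we may cancel it and conclude $\rho(C)\ge t$, as desired. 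If instead $w^Ty=0$, a short additional argument is needed: one can either perturb $C$ to $C_\delta:=C+\delta J$ with $J$ the all-ones matrix, for which the left Perron vector is strictly positive so $w_\delta^Ty>0$ gives $\rho(C_\delta)\ge t$, and then let $\delta\downarrow0$ using continuity of the spectral radius; or one restricts attention to the support of $y$ and the invariant subspace it generates. Either way we obtain $\rho((|a_{ij}|))\ge\rho(A)$.

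\emph{Second inequality, $\rho((|a_{ij}|))\le\rho(B)$.} Here both matrices are nonnegative with $|a_{ij}|\le b_{ij}$. By Lemma~\ref{L.1.11} applied to $(|a_{ij}|)$, there is a nonnegative nonzero vector $y$ with $(|a_{ij}|)\,y=\rho((|a_{ij}|))\,y$. Since $0\le|a_{ij}|\le b_{ij}$ and $y\ge0$, we get entrywise
$$By\ge(|a_{ij}|)\,y=\rho((|a_{ij}|))\,y,$$
so $By\ge ty$ with $t=\rho((|a_{ij}|))$. Applying the auxiliary monotonicity fact established in the first part (with $C=B$, the same nonnegative nonzero $y$, and this $t$) yields $\rho(B)\ge\rho((|a_{ij}|))$.

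\emph{Summary of structure.} The proof factors into (i) the elementary triangle-inequality reduction $Ax=\lambda x\Rightarrow(|a_{ij}|)\,|x|\ge\rho(A)\,|x|$; (ii) the lemma ``$C\ge0$, $y\ge0$, $y\ne0$, $Cy\ge ty$ $\Rightarrow$ $\rho(C)\ge t$,'' proved via the left Perron eigenvector from Lemma~\ref{L.1.11} plus a perturbation argument to rule out the degenerate case $w^Ty=0$; and (iii) a second comparison $By\ge(|a_{ij}|)y$ obtained from $|a_{ij}|\le b_{ij}$ together with $y\ge0$, fed again into the lemma of step (ii). I expect step (ii)'s degenerate case to be the only place requiring care; everything else is routine manipulation of nonnegative vectors and the already-cited Perron–Frobenius statement.
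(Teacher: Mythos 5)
Your proof is correct. Note, however, that the paper does not prove this lemma at all: it is stated with a citation to Horn and Johnson, Theorem 8.1.18, so there is no in-paper argument to match. Your route is genuinely different from the one in the cited source. Horn and Johnson prove the statement without any Perron--Frobenius input (which in their book only appears later, as Theorem 8.3.1, the paper's Lemma~\ref{L.1.11}): they establish the entrywise bounds $|A^m|\leq(|a_{ij}|)^m\leq B^m$ by induction, apply a monotone matrix norm, and invoke Gelfand's formula $\rho(M)=\lim_m\vertiii{M^m}^{1/m}$. You instead reduce everything to the monotonicity principle ``$C\ge0$, $y\ge0$, $y\neq0$, $Cy\ge ty$ entrywise implies $\rho(C)\ge t$,'' proved via a left Perron eigenvector from Lemma~\ref{L.1.11}. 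Your handling of the degenerate case $w^Ty=0$ by perturbing to $C+\delta J$ is sound (the left Perron vector of the perturbed matrix is strictly positive, directly from the eigenvector equation, and the spectral radius is continuous in $\delta$), and this is indeed the only delicate point. The trade-off: your argument leans on the heavier Perron--Frobenius machinery but isolates a reusable comparison lemma; the Gelfand-formula proof is more elementary and self-contained, which is presumably why the textbook orders things that way.
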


\begin{lemma}
[{\cite[Lemma 5.7.9]{horn1991topics}}]
   \label{L.1.13}
   Let $A=(a_{ij})$ and $B=(b_{ij})$ be $N\times N$ with $a_{ij}$, $b_{ij}$ nonnegative. Then
$$\rho((a_{ij}^{\frac12}b_{ij}^{\frac12}))\leq(\rho(A))^{\frac12}(\rho(B))^{\frac12}.$$
\end{lemma}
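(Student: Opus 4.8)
The final statement (Lemma~\ref{L.1.13}) bounds the spectral radius of the ``geometric-mean'' matrix $C=(c_{ij})$, $c_{ij}=a_{ij}^{1/2}b_{ij}^{1/2}$, in terms of those of $A$ and $B$. The plan is to compare the $k$-th powers of $A$, $B$ and $C$ through a single nonnegative functional and apply the Cauchy--Schwarz inequality walk by walk. Note first that $C$ again has nonnegative entries and $c_{ij}^2=a_{ij}b_{ij}$. For an $N\times N$ matrix $M=(m_{ij})$ with nonnegative entries and $k\ge1$, set
$$s_k(M)=\sum_{i,j=1}^N(M^k)_{ij}=\sum_{i_1,\ldots,i_{k+1}}\,\prod_{t=1}^k m_{i_ti_{t+1}},$$
the last expression being the expansion of $M^k$ as a sum over walks $i_1\to i_2\to\cdots\to i_{k+1}$ of length $k$.

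First I would establish the comparison $s_k(C)\le s_k(A)^{1/2}\,s_k(B)^{1/2}$. For each walk the corresponding term for $C$ factors as $\prod_t c_{i_ti_{t+1}}=\bigl(\prod_t a_{i_ti_{t+1}}\bigr)^{1/2}\bigl(\prod_t b_{i_ti_{t+1}}\bigr)^{1/2}$, so Cauchy--Schwarz over the (finite) set of length-$k$ walks gives the claim at once. It is worth noting that passing through powers is essential here: the pointwise estimate $c_{ij}\le\frac12(t\,a_{ij}+t^{-1}b_{ij})$ combined with the monotonicity of $\rho$ in Lemma~\ref{L.1.12} does \emph{not} give the result, because $\rho$ is not subadditive on nonnegative matrices.

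Next I would relate $s_k$ to the spectral radius. Since $M^k$ has nonnegative entries, $\rho(M)^k=\rho(M^k)\le\vertiii{M^k}_\infty=\max_i\sum_j(M^k)_{ij}\le s_k(M)$, where $\vertiii{\cdot}_\infty$ is the maximum-row-sum operator norm; hence $\rho(M)\le s_k(M)^{1/k}$ for all $k$. Conversely $s_k(M)\le N\,\vertiii{M^k}_\infty$, so $s_k(M)^{1/k}$ and $\vertiii{M^k}_\infty^{1/k}$ share the same limit, namely $\rho(M)$ --- this is Gelfand's spectral radius formula, or, elementarily, the fact that $(M/(\rho(M)+\delta))^k\to0$ forces $s_k(M)^{1/k}\le\rho(M)+\delta$ eventually, for every $\delta>0$. (If $\rho(M)=0$ then $M$ is nilpotent, $M^N=0$, and $s_k(M)=0$ for $k\ge N$, consistently.)

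Putting the pieces together, for every $k\ge1$,
$$\rho(C)\le s_k(C)^{1/k}\le s_k(A)^{1/(2k)}\,s_k(B)^{1/(2k)},$$
and letting $k\to\infty$ yields $\rho(C)\le\rho(A)^{1/2}\rho(B)^{1/2}$, which is the assertion. The heart of the argument is the walk expansion together with the single Cauchy--Schwarz step; the one point requiring care is the identity $\lim_k s_k(M)^{1/k}=\rho(M)$, i.e.\ that the spectral radius is the exponential growth rate of matrix powers, which I would either quote as Gelfand's formula or derive from the sandwich $\vertiii{M^k}_\infty\le s_k(M)\le N\vertiii{M^k}_\infty$ together with the standard fact about powers of a matrix of spectral radius $<1$.
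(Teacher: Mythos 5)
Your proof is correct. Note that the paper itself offers no argument for this lemma: it is quoted verbatim from Horn and Johnson, \emph{Topics in Matrix Analysis}, Lemma 5.7.9, so there is no in-paper proof to compare against. Your walk-counting argument is a sound, self-contained alternative: the factorization $\prod_t c_{i_ti_{t+1}}=\bigl(\prod_t a_{i_ti_{t+1}}\bigr)^{1/2}\bigl(\prod_t b_{i_ti_{t+1}}\bigr)^{1/2}$ plus one Cauchy--Schwarz over walks gives $s_k(C)\le s_k(A)^{1/2}s_k(B)^{1/2}$, the sandwich $\rho(M)^k\le s_k(M)\le N\vertiii{M^k}_\infty$ correctly identifies $\lim_k s_k(M)^{1/k}=\rho(M)$ via Gelfand, and the degenerate case $\rho(A)=0$ or $\rho(B)=0$ is handled. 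Your parenthetical warning is also apt: the pointwise AM--GM bound $c_{ij}\le\frac12(ta_{ij}+t^{-1}b_{ij})$ together with the monotonicity of Lemma \ref{L.1.12} only yields $\rho(C)\le\rho(\frac12(tA+t^{-1}B))$, which cannot be split since $\rho$ is not subadditive on nonnegative matrices (e.g.\ two complementary nilpotent Jordan blocks). For comparison, the textbook route avoids matrix powers altogether: for $\alpha>\rho(A)$, $\beta>\rho(B)$ one can choose positive vectors $x,y$ with $Ax\le\alpha x$ and $By\le\beta y$ entrywise, and then a single weighted Cauchy--Schwarz gives, with $z_i=x_i^{1/2}y_i^{1/2}$, $(Cz)_i\le(\alpha\beta)^{1/2}z_i$, whence $\rho(C)\le(\alpha\beta)^{1/2}$; that version buys a one-line verification once Perron--Frobenius machinery is in hand, whereas yours is more elementary, needing only the spectral radius formula.
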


\begin{lemma}[{\cite[Lemma 5.6.10]{horn1990matrix}}]
  \label{L.1.14}
  For square $A$ and $\epsilon >0$ there exists a matrix norm $\vertiii{\cdot}$ such that $\rho(A)\leq\vertiii{A}\leq\rho(A)+\epsilon$.
\end{lemma}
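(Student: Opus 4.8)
The plan is the classical one: reduce to upper–triangular form, shrink the strictly upper part by a diagonal similarity, and then pull a concrete matrix norm back through that similarity. First I would observe that the lower bound $\rho(A)\le\vertiii{A}$ is automatic for \emph{any} matrix norm $\vertiii{\cdot}$ (a matrix norm being, by convention, submultiplicative): if $\lambda$ is an eigenvalue with $|\lambda|=\rho(A)$ and $x\ne0$ a corresponding eigenvector, let $X$ be the $n\times n$ matrix all of whose columns equal $x$; then $X\ne0$, $AX=\lambda X$, and submultiplicativity gives $|\lambda|\,\vertiii{X}=\vertiii{AX}\le\vertiii{A}\,\vertiii{X}$, whence $|\lambda|\le\vertiii{A}$. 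So it suffices to construct one matrix norm with $\vertiii{A}\le\rho(A)+\epsilon$.

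Next, by Schur's unitary triangularization I would write $A=UTU^*$ with $U$ unitary and $T=(t_{ij})$ upper triangular, the diagonal entries $t_{ii}$ running over the eigenvalues of $A$, so $|t_{ii}|\le\rho(A)$. For $t>0$ set $D_t=\mathrm{diag}(t,t^2,\dots,t^n)$; then the $(i,j)$ entry of $D_tTD_t^{-1}$ is $t^{\,i-j}t_{ij}$, equal to $t_{ii}$ on the diagonal, zero below it, and $t^{-(j-i)}t_{ij}$ (which $\to0$ as $t\to\infty$) strictly above it. Hence in the maximum absolute row-sum matrix norm $\|\cdot\|_\infty$ one has $\|D_tTD_t^{-1}\|_\infty=\max_i\bigl(|t_{ii}|+\sum_{j>i}t^{-(j-i)}|t_{ij}|\bigr)\longrightarrow\max_i|t_{ii}|=\rho(A)$ as $t\to\infty$, so I can fix $t=t_0$ large enough that $\|D_{t_0}TD_{t_0}^{-1}\|_\infty\le\rho(A)+\epsilon$.

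Finally, with $S:=D_{t_0}U^*$ (invertible), I would define $\vertiii{B}:=\|SBS^{-1}\|_\infty$. This is again a matrix norm: positivity, homogeneity and the triangle inequality are inherited immediately from $\|\cdot\|_\infty$, and submultiplicativity follows from $\vertiii{BC}=\|(SBS^{-1})(SCS^{-1})\|_\infty\le\|SBS^{-1}\|_\infty\|SCS^{-1}\|_\infty=\vertiii{B}\,\vertiii{C}$. Then $\vertiii{A}=\|S A S^{-1}\|_\infty=\|D_{t_0}U^*(UTU^*)UD_{t_0}^{-1}\|_\infty=\|D_{t_0}TD_{t_0}^{-1}\|_\infty\le\rho(A)+\epsilon$, while $\rho(A)\le\vertiii{A}$ by the first paragraph, which is the desired sandwich. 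There is no genuine obstacle here; the only point that needs a moment's care is the (routine) check that conjugating a submultiplicative norm by a fixed invertible matrix preserves submultiplicativity, together with correctly bookkeeping the powers of $t_0$ in the scaled triangular matrix.
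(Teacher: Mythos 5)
Your proof is correct and is precisely the standard argument behind the cited result (Horn--Johnson, Lemma 5.6.10): Schur triangularization, the diagonal similarity $D_t=\mathrm{diag}(t,\dots,t^n)$ to damp the strictly upper-triangular part, and the conjugated maximum row-sum norm, together with the routine lower bound $\rho(A)\le\vertiii{A}$ valid for every submultiplicative matrix norm. The paper states this lemma by citation without proof, so there is nothing further to compare.
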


\begin{lemma}
[{\cite[Theorem 5.6.26]{horn1990matrix}}]
  \label{L.1.15}
  Let $\vertiii{\cdot}$ be a given matrix norm on $n\times n$ matrices.  Then there exists an induced matrix norm $N(\cdot)$ on $n\times n$ matrices such that, for any $n\times n$   matrix $A$  we have
  $N(A)\leq\vertiii{A}$.

\end{lemma}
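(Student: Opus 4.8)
The plan is to manufacture a vector norm on $\Bbb C^n$ out of the given matrix norm $\vertiii{\cdot}$ by restricting $\vertiii{\cdot}$ to a fixed one-parameter family of rank-one matrices, and then to take $N$ to be the matrix norm that this vector norm induces. Concretely, fix once and for all a nonzero vector $v\in\Bbb C^n$, and for $x\in\Bbb C^n$ set $\|x\|_v:=\vertiii{xv^*}$, where $xv^*$ denotes the $n\times n$ outer product (a rank-one matrix when $x\neq0$).

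First I would verify that $\|\cdot\|_v$ is a vector norm on $\Bbb C^n$. Homogeneity and the triangle inequality follow at once from the corresponding properties of the matrix norm $\vertiii{\cdot}$ together with the identities $(\alpha x)v^*=\alpha(xv^*)$ and $(x+y)v^*=xv^*+yv^*$. The only point needing a word is positive definiteness: if $x\neq0$ then, since $v\neq0$, the matrix $xv^*$ is a nonzero element of $M_n(\Bbb C)$, so $\|x\|_v=\vertiii{xv^*}>0$ because a matrix norm is in particular a norm on $M_n(\Bbb C)$.

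Next I would let $N$ be the matrix norm induced by $\|\cdot\|_v$, that is, $N(A)=\max_{\|x\|_v=1}\|Ax\|_v$; this maximum is attained and finite because $\|\cdot\|_v$ is a norm on the finite-dimensional space $\Bbb C^n$, so its unit sphere is compact and $x\mapsto\|Ax\|_v$ is continuous. By construction $N$ is an induced (operator) matrix norm. To obtain the domination $N(A)\leq\vertiii{A}$, I would observe that for every $x\neq0$,
$$\|Ax\|_v=\vertiii{(Ax)v^*}=\vertiii{A(xv^*)}\leq\vertiii{A}\,\vertiii{xv^*}=\vertiii{A}\,\|x\|_v,$$
where the inequality is submultiplicativity of the matrix norm $\vertiii{\cdot}$ applied to the product $A\cdot(xv^*)$. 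Dividing by $\|x\|_v$ and taking the supremum over $x\neq0$ yields $N(A)\leq\vertiii{A}$ for all $n\times n$ matrices $A$, as required.

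There is no serious obstacle here; the only things to be careful about are the bookkeeping of the two roles played by $\vertiii{\cdot}$ — as an ordinary norm on $M_n(\Bbb C)$, used in checking the norm axioms for $\|\cdot\|_v$, and as a submultiplicative matrix norm, used in the final estimate — together with the standard compactness argument guaranteeing that the induced norm $N$ is well defined.
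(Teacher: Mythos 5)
Your proof is correct and is precisely the standard argument for this result (the paper does not prove it but cites Horn--Johnson, Theorem 5.6.26, whose proof is exactly this construction: define $\|x\|_v=\vertiii{xv^*}$ for a fixed nonzero $v$ and use submultiplicativity of $\vertiii{\cdot}$ on the product $A(xv^*)$). Nothing to add.
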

From the last two lemmas, we have
\begin{lemma}
  \label{L.1.16}
  Let $A$ be $n\times n$ satisfy $\rho(A)<1$.   Then there exists a vector norm  on $\Bbb C^n$, such that with $\vertiii{\cdot}$ denoting its induced matrix norm, we have $\vertiii{A}<1$.

\end{lemma}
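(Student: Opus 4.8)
The plan is to obtain the desired norm by composing Lemmas \ref{L.1.14} and \ref{L.1.15}. Since $\rho(A)<1$, I would set $\epsilon=(1-\rho(A))/2>0$. By Lemma \ref{L.1.14} there is a matrix norm $\vertiii{\cdot}'$ on the $n\times n$ matrices with $\vertiii{A}'\le\rho(A)+\epsilon=(1+\rho(A))/2<1$.

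Next, apply Lemma \ref{L.1.15} to $\vertiii{\cdot}'$ to produce an induced matrix norm $N(\cdot)$ satisfying $N(B)\le\vertiii{B}'$ for every $n\times n$ matrix $B$; in particular $N(A)\le\vertiii{A}'<1$. By definition an induced matrix norm has the form $N(B)=\max_{x\ne0}\|Bx\|/\|x\|$ for some vector norm $\|\cdot\|$ on $\Bbb C^n$, so taking this $\|\cdot\|$ as the vector norm and $\vertiii{\cdot}:=N(\cdot)$ as the associated induced matrix norm yields $\vertiii{A}<1$, as required.

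This is essentially bookkeeping; the only point to keep straight — and the reason the detour through Lemma \ref{L.1.15} is necessary — is that Lemma \ref{L.1.14} delivers only a (sub-multiplicative) matrix norm, which need not be induced by any vector norm, so neither lemma alone suffices.
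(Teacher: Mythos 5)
Your argument is exactly the one the paper intends: the paper simply remarks that the lemma follows ``from the last two lemmas,'' meaning Lemmas \ref{L.1.14} and \ref{L.1.15}, and your composition (first shrink to $\rho(A)+\epsilon<1$ via Lemma \ref{L.1.14}, then pass to a dominated induced norm via Lemma \ref{L.1.15}) is the standard way to carry that out. The proposal is correct and matches the paper's approach.
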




\begin{lemma}\label{supnormdiag} Let $A$ be an $m\times n$ matrix.  Then $\vertiii{A}_2=\vertiii{A\text{diag}(\omega_1,\ldots,\omega_n)}_2$, where the $\omega_i$ are numbers on the unit circle in the complex plane.

Proof. Let $y_i\in\Bbb C^n$, $i=1,2$ be unit vectors for which $\vertiii{A}=\|Ay_1\|_2$ and 
$\vertiii{A\text{diag}(\omega_1,\ldots,\omega_n)}_2=\|A\text{diag}(\omega_1,\ldots,\omega_n)y_2\|_2$.  Then
\begin{multline*}\vertiii{A}_2\ge\|A\text{diag}(\omega_1,\ldots,\omega_n)y_2\|_2=\vertiii{A\text{diag}(\omega_1,\ldots,\omega_n)}_2\hfill\\ \hfill\ge\|A\text{diag}(\omega_1,\ldots,\omega_n)
\text{diag}(\overline\omega_1,\ldots,\overline\omega_n)y_1\|_2=\vertiii{A}_2,\end{multline*}
so we get our result.
\end{lemma}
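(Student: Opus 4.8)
The plan is to prove the identity directly from the variational characterization of the spectral norm, exploiting that $D:=\text{diag}(\omega_1,\ldots,\omega_n)$ is a unitary matrix. Since $|\omega_i|=1$ for every $i$, one has $D^*D=DD^*=I_n$; in particular $D$ is invertible with $D^{-1}=D^*=\text{diag}(\overline\omega_1,\ldots,\overline\omega_n)$, and the map $x\mapsto Dx$ sends the Euclidean unit sphere of $\Bbb C^n$ bijectively onto itself.

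First I would recall that for any $m\times n$ matrix $B$ one has $\vertiii{B}_2=\sup\{\|Bx\|_2:\|x\|_2=1\}$. Applying this with $B=AD$ and substituting $y=Dx$, the constraint $\|x\|_2=1$ is equivalent to $\|y\|_2=1$ because $D$ is unitary, while $ADx=Ay$; hence $\vertiii{AD}_2=\sup_{\|x\|_2=1}\|ADx\|_2=\sup_{\|y\|_2=1}\|Ay\|_2=\vertiii{A}_2$, which is exactly the assertion. This is morally the same as the two-sided inequality obtained by comparing a maximizing unit vector $y_1$ for $A$ with a maximizing unit vector $y_2$ for $AD$, using that $Dy_2$ and $D^*y_1$ are again unit vectors.

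A slightly cleaner route, which avoids mentioning suprema at all, is to pass to Gram matrices: $(AD)(AD)^*=ADD^*A^*=AA^*$, and since the spectral norm of a rectangular matrix is the square root of the largest eigenvalue of its Gram matrix, $\vertiii{AD}_2^2=\vertiii{(AD)(AD)^*}_2=\vertiii{AA^*}_2=\vertiii{A}_2^2$. Running the same computation with the trace in place of the largest eigenvalue gives the corresponding invariance of the Frobenius norm.

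There is essentially no genuine obstacle here. The only point needing a word of care in the first argument is that the supremum defining $\vertiii{A}_2$ is attained, so that one may legitimately speak of a maximizing unit vector; this holds by compactness of the unit sphere in the finite-dimensional space $\Bbb C^n$. The Gram-matrix argument sidesteps this altogether, so that is the version I would ultimately write down.
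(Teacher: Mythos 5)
Your proposal is correct, and you in fact give two complete arguments. The first (substitution $y=Dx$ in the variational definition of $\vertiii{\cdot}_2$, using that the unitary diagonal matrix $D$ maps the unit sphere onto itself) is essentially the paper's own proof rephrased: the paper picks maximizing unit vectors $y_1$ for $A$ and $y_2$ for $AD$ and derives the chain $\vertiii{A}_2\ge\|A(Dy_2)\|_2=\vertiii{AD}_2\ge\|A(DD^*y_1)\|_2=\vertiii{A}_2$, which is precisely your "two-sided inequality" remark, and both arguments implicitly use compactness of the unit sphere to justify that the suprema are attained. Your preferred route via Gram matrices, $(AD)(AD)^*=ADD^*A^*=AA^*$ so $\vertiii{AD}_2^2=\vertiii{AA^*}_2=\vertiii{A}_2^2$, is a genuinely different and slightly cleaner argument: it needs no maximizing vectors, works verbatim for any matrix $D$ with $DD^*=I$, and, as you note, immediately yields the Frobenius-norm analogue by replacing the largest eigenvalue with the trace. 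The paper's version is marginally more self-contained in that it does not invoke the identity $\vertiii{B}_2^2=\vertiii{BB^*}_2$, but otherwise the two proofs are of comparable length and both are correct.
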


For sub-probability measures $\{\mu_n\}$, $\mu$, since for fixed $z\in\Bbb C^+$ the real and imaginary parts of $1/(x-z)$ are continuous and approach 0 as $|x|\to\infty$, we have 
$\mu_n\varrow\mu$ ($\varrow$ denoting vague convergence) implies $m_{\mu_n}(z)\to m_{\mu}(z)$.  Conversely, if $m_{\mu_n}(z)$ converges for a countably infinite number of $z\in\Bbb C^+$ possessing a cluster point, all uniformly bounded away from the real axis, from Vitali's convergence theorem  [{\cite[p. 168]{titchmarsh}}], $m_{\mu_n}(z)$ converges for all $z$ uniformly bounded away from the real axis to an analytic function $m$.   
Therefore any vaguely converging subsequence of $\{m_{\mu}\}$ has their Stieltjes transforms converging to $m$, and  because of the existence of the inverse formula \eqref{STinv} we see that 
$\mu_n$ converges vaguely to a sub-probability measure $\mu$ having Stieltjes transform $m$.  Thus we have

\begin{lemma}\label{vaguelimit} If for sub-probability measures $\mu_n$, we have $m_{\mu_n}(z)$ converging for a countably infinite number of $z$ uniformly bounded away from the real axis and
possessing a cluster point, 
then there exists a sub-probability measure $\mu$ for which $\mu_n\varrow\mu$, or equivalently $D(\mu_n,\mu)\to0$.
\end{lemma}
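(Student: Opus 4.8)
The plan is to combine vague precompactness of $\{\mu_n\}$ with Vitali's theorem and the uniqueness built into the inversion formula \eqref{STinv}.

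First, since each $\mu_n$ is a sub-probability measure, $|m_{\mu_n}(z)|\leq 1/\Im z$, so $\{m_{\mu_n}\}$ is a locally uniformly bounded, hence normal, family of analytic functions on $\Bbb C^+$. By hypothesis $m_{\mu_n}(z)$ converges on a countable set having a cluster point and lying in a region bounded away from $\Bbb R$, so Vitali's convergence theorem [{\cite[p. 168]{titchmarsh}}] yields an analytic $m$ on $\Bbb C^+$ with $m_{\mu_n}\to m$ pointwise (indeed locally uniformly) on $\Bbb C^+$.

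Next I would pass to subsequences. Take any subsequence of $\{\mu_n\}$; by Helly's selection theorem it has a further subsequence $\mu_{n_k}\varrow\nu$ for some sub-probability measure $\nu$. For each fixed $z\in\Bbb C^+$ the real and imaginary parts of $x\mapsto1/(x-z)$ are continuous and tend to $0$ as $|x|\to\infty$, so vague convergence forces $m_{\mu_{n_k}}(z)\to m_\nu(z)$; comparing with the previous step gives $m_\nu=m$ on $\Bbb C^+$. The inversion formula \eqref{STinv} then recovers $\nu$ from $m$, so $\nu$ does not depend on the chosen subsequence: denoting this common limit by $\mu$, every subsequence of $\{\mu_n\}$ has a sub-subsequence converging vaguely to $\mu$, whence $\mu_n\varrow\mu$. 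Finally, as observed just after the definition of $D$, vague convergence of sub-probability measures is equivalent to $D(\mu_n,\mu)\to0$.

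The only delicate point is the uniqueness step: one must be sure that whatever mass escapes to infinity along different subsequences cannot lead to different vague limits, and this is exactly what \eqref{STinv} rules out, since $m$ alone pins down the limiting measure. Everything else is a routine invocation of standard facts (normal families, Vitali, Helly), so I expect no real obstacle beyond keeping track of the $C_0$-versus-$C_c$ distinction when deducing Stieltjes-transform convergence from vague convergence.
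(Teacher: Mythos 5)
Your proof is correct and follows essentially the same route as the paper's: Vitali's theorem to upgrade pointwise convergence on the countable set to convergence of $m_{\mu_n}$ on all of $\Bbb C^+$, Helly precompactness of sub-probability measures, vague convergence $\Rightarrow$ Stieltjes-transform convergence (since $1/(x-z)\in C_0$), and the inversion formula \eqref{STinv} to identify all subsequential vague limits, forcing convergence of the whole sequence. You merely make explicit the normal-family bound and the subsequence-of-a-subsequence bookkeeping that the paper leaves implicit.
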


\begin{lemma} [{\cite[Lemma 2.2]{shohat}}]
 \label{stielt} Let $f$ be analytic in $\Bbb C^+$ mapping $\Bbb C^+$ into $\Bbb C^+$, and there is a $\theta\in(0,\pi/2)$ for which $zf(z)\to c$, finite, as $z\to\infty$ restricted to $\{w\in\Bbb C^+:\theta<\arg w<\pi-\theta\}$.  Then $f$ is the Stieltjes transform of a measure with total mass $-c$.
\end{lemma}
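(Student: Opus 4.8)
\textbf{Proof plan for Lemma \ref{stielt}.}
The plan is to recover the measure directly via the Stieltjes inversion formula and then verify its total mass. First I would recall the standard Nevanlinna (Herglotz) representation for functions mapping $\Bbb C^+$ into $\Bbb C^+\cup\Bbb R$: any such analytic $f$ admits
\[
f(z)=a+bz+\int_{\Bbb R}\frac{1+xz}{x-z}\,d\sigma(x),
\]
with $a\in\Bbb R$, $b\ge0$, and $\sigma$ a finite positive measure. Equivalently, writing $d\mu(x)=(1+x^2)\,d\sigma(x)$, one gets $f(z)=a+bz+\int\bigl(\frac1{x-z}-\frac{x}{1+x^2}\bigr)\,d\mu(x)$ for a positive measure $\mu$ with $\int\frac{d\mu(x)}{1+x^2}<\infty$. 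The goal is to show the hypothesis $zf(z)\to c$ along the Stolz-type sector forces $b=0$, $a$ to be absorbed, and $\mu$ to be finite with $\mu(\Bbb R)=-c$.

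The key steps, in order: (1) From $zf(z)\to c$ finite, deduce $f(z)\to0$ as $z\to\infty$ in the sector, so in particular $\Im f(z)\to0$ there; combined with $b\ge0$ and the elementary estimate $\Im f(iy)\ge by$, letting $y\to\infty$ forces $b=0$. (2) With $b=0$, write $zf(z)=az+\int\frac{z}{x-z}\,d\mu(x)-\int\frac{xz}{1+x^2}\,d\mu(x)$; evaluate along $z=iy$ and split the first integral using $\frac{iy}{x-iy}=-1+\frac{x}{x-iy}$. Dominated/monotone convergence applied to $\Im(zf(z))$ at $z=iy$ gives $y\,\Im f(iy)\to\mu(\Bbb R)$ (finiteness of $\mu$ falling out of the fact that this limit exists and equals $-\Im c=0$ is automatic here since $c$ is real — more precisely, $c$ must be real and nonpositive, and $\mu(\Bbb R)=-c<\infty$). (3) Having established $\mu$ finite and $b=0$, show $a$ and the $\int\frac{x}{1+x^2}\,d\mu$ term combine to force $a=\int\frac{x}{1+x^2}\,d\mu(x)$ by using $zf(z)\to c$ along the real-part direction: the real part of $zf(z)$ at $z=iy$ tends to $-a+\int\frac{x}{1+x^2}d\mu$ minus a vanishing piece, and this must be the real part of $c$; reconciling with the imaginary-part computation pins down $f(z)=\int\frac1{x-z}\,d\mu(x)$, i.e. $f=m_\mu$, and $\mu(\Bbb R)=-c$.

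Alternatively, and perhaps more cleanly, I would avoid the full Nevanlinna machinery and argue by Stieltjes inversion plus a tightness/total-mass bound: for $a<b$ continuity points, $\mu([a,b]):=\frac1\pi\lim_{\eta\to0^+}\int_a^b\Im f(\xi+i\eta)\,d\xi$ defines a positive Borel measure (positivity since $\Im f>0$ on $\Bbb C^+$), and one checks $f(z)=\int\frac{d\mu(x)}{x-z}+(\text{entire part})$ where the entire part is a polynomial of degree $\le1$ with nonnegative leading coefficient; the hypothesis kills the polynomial as in step (1)–(2), and testing $zf(z)$ along $z=iy$ with $\frac{iy}{x-iy}\to-1$ boundedly gives $\mu(\Bbb R)=\lim_{y\to\infty}(-iy)f(iy)=-c$ by dominated convergence once $\mu$ is known finite — and finiteness itself follows because $iy\,\Im f(iy)$ is bounded (its limit exists), which bounds $\mu(\Bbb R-\text{compact})$ uniformly. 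The main obstacle is the bookkeeping in step (2)–(3): justifying the interchange of limit and integral to extract exactly $\mu(\Bbb R)$ (not a proper subset's mass) from $zf(z)$, which requires the sectorial restriction $\theta<\arg w<\pi-\theta$ precisely so that $\bigl|\frac{z}{x-z}\bigr|$ stays uniformly bounded in $x\in\Bbb R$ as $z\to\infty$ in the sector — this uniform bound is what legitimizes dominated convergence and is the crux of why the sector hypothesis is needed rather than unrestricted $z\to\infty$.
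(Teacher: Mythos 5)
The paper offers no proof of this lemma; it is imported verbatim from Shohat--Tamarkin, so there is no internal argument to compare against. Your proposal is the standard self-contained proof via the Nevanlinna--Herglotz representation $f(z)=a+bz+\int\bigl(\frac1{x-z}-\frac{x}{1+x^2}\bigr)d\mu(x)$, and its structure is sound: $f(iy)\to0$ kills $b$; monotone convergence applied to $\int\frac{y^2}{x^2+y^2}\,d\mu(x)$ yields finiteness of $\mu$ and identifies its mass; and the surviving constant forces $a=\int\frac{x}{1+x^2}\,d\mu(x)$, leaving $f=m_\mu$. Two small corrections to your bookkeeping in step (2): the quantity $y\,\Im f(iy)=\int\frac{y^2}{x^2+y^2}\,d\mu(x)$ is $-\Re\bigl(iyf(iy)\bigr)$, not its imaginary part, so the mass identity reads $\mu(\Bbb R)=-\Re c$; that $\Im c=0$ then follows separately from $\Im\bigl(iyf(iy)\bigr)=y\,\Re f(iy)\to0$ after you have pinned down $a$ (your parenthetical reaches the right conclusion, $c$ real and $\mu(\Bbb R)=-c$, but via mislabeled parts). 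Also note that the full sector hypothesis is not needed for this direction of the argument --- the limit along the single ray $z=iy$ suffices, since that ray lies in every such sector; the uniform bound $|z/(x-z)|\le1/\sin\theta$ you isolate is what makes the hypothesis checkable for genuine Stieltjes transforms (the converse), not what drives the proof here. Your ``alternative'' route via Stieltjes inversion is not really an alternative: establishing that the inversion formula defines a measure and that $f$ equals its transform up to an affine term already requires the Herglotz representation or an equivalent compactness argument, so it is circular as a way of avoiding that machinery.
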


\begin{lemma}[{\cite[Corolllary 8.1.20]{horn1990matrix}}]\label{diagboun} Let $C$ $N\times N$ have nonnegative entries.  Then for each $i$ $C_{ii}\leq\rho(C)$.
\end{lemma}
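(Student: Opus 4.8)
The plan is to obtain this as an immediate consequence of Lemma~\ref{L.1.12}. Fix $i\in\{1,\dots,N\}$ and let $A=(a_{jk})$ be the $N\times N$ matrix whose only nonzero entry is $a_{ii}=C_{ii}$. Every entry of $A$ satisfies $|a_{jk}|\le C_{jk}$: for $(j,k)=(i,i)$ this is the equality $|C_{ii}|=C_{ii}$, while for $(j,k)\ne(i,i)$ it reads $0\le C_{jk}$, which holds since $C$ has nonnegative entries. Hence the hypotheses of Lemma~\ref{L.1.12} are met with $B=C$, and the lemma yields $\rho(A)\le\rho(C)$.

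Since $A$ is diagonal with a single nonzero diagonal entry $C_{ii}\ge 0$, its spectrum is $\{C_{ii},0\}$ (with $0$ of multiplicity $N-1$ when $N>1$), so $\rho(A)=C_{ii}$. Combining the two relations gives $C_{ii}\le\rho(C)$, and since $i$ was arbitrary this is the assertion. (The case $N=1$ is the trivial $C_{11}\le|C_{11}|=\rho(C)$, and is anyway covered by the same reasoning.) There is essentially no obstacle: the entire content is carried by Lemma~\ref{L.1.12}, the only things to check being the entrywise domination $|a_{jk}|\le C_{jk}$ and the value $\rho(A)=C_{ii}$, both immediate.

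Should one want an argument that does not quote Lemma~\ref{L.1.12}, a self-contained alternative runs through Gelfand's formula. Because $C$ has nonnegative entries, expanding $(C^m)_{ii}=\sum_{k_1,\dots,k_{m-1}}C_{ik_1}C_{k_1k_2}\cdots C_{k_{m-1}i}$ exhibits each summand as nonnegative, with the constant choice $k_1=\dots=k_{m-1}=i$ contributing $(C_{ii})^m$; hence $(C^m)_{ii}\ge(C_{ii})^m$ for every $m\ge1$. Writing $e_i$ for the $i$-th canonical basis vector of $\Bbb C^N$ and using $|(C^m)_{ii}|=|e_i^{*}C^{m}e_i|\le\vertiii{C^m}_2$ together with $\rho(C)=\lim_{m\to\infty}\vertiii{C^m}_2^{1/m}$, one gets $\rho(C)\ge\lim_{m\to\infty}\bigl((C_{ii})^m\bigr)^{1/m}=C_{ii}$. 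Both routes are short; I would present the first.
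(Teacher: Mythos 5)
Your proposal is correct. The paper does not prove this lemma; it simply cites Corollary~8.1.20 of Horn and Johnson (1990), so there is no in-paper argument to compare against. Your first route is a clean derivation from Lemma~\ref{L.1.12} (itself quoted in the paper as Theorem~8.1.18 of Horn and Johnson): taking $A$ to be the matrix with the single nonzero entry $a_{ii}=C_{ii}$ and $B=C$, the entrywise domination $|a_{jk}|\le C_{jk}$ is immediate from nonnegativity, and $\rho(A)=C_{ii}$ since $A$ is diagonal with one nonnegative entry; Lemma~\ref{L.1.12} then gives $C_{ii}\le\rho(C)$. This has the advantage of deriving the cited corollary from another result already in the paper's lemma list, keeping the logical dependencies internal. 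Your second route via $(C^m)_{ii}\ge(C_{ii})^m$ and Gelfand's formula is also correct and fully self-contained, at the cost of invoking the spectral-radius formula, which the paper does not otherwise use. Either would serve; the first is preferable in context precisely because it leans only on Lemma~\ref{L.1.12}.
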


\begin{lemma}[{\cite[Theorem 2.1]{sandc}}]\label{density} Let $G$ be a probability distribution function and $x_0\in\Bbb R$. Let $m_G$ be its Stieltjes transform.  Suppose $\Im m_G(x_0)\equiv\lim_{z\in\Bbb C^+}\Im m_G(z)$ exists.  Then $G$ is differentiable at $x_o$, and its derivative is $\frac1{\pi}\Im m_G(x_0)$.
\end{lemma}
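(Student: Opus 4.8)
The plan is to derive differentiability of $G$ at $x_0$ directly from the inversion formula \eqref{STinv}, using the hypothesis to control $\Im m_G$ uniformly on a small punctured disc around $x_0$. I read the hypothesis as: $\Im m_G(z)$ has a finite limit $L:=\Im m_G(x_0)$ as $z\to x_0$ with $z$ unrestricted in $\Bbb C^+$; note $L\ge 0$. Write $\mu$ for the probability measure with distribution function $G$; the goal is to show $G'(x_0)=L/\pi$.

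First I would check that $x_0$ is a continuity point of $G$. If $\mu(\{x_0\})=p>0$, then from the definition \eqref{STdef}, $\Im m_G(x_0+iv)=\int\frac{v}{(x-x_0)^2+v^2}\,d\mu(x)\ge p/v\to\infty$ as $v\downarrow0$, contradicting the finiteness of $L$; hence $\mu(\{x_0\})=0$.

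Next, fix $\epsilon>0$ and choose $\delta_0>0$ with $|\Im m_G(z)-L|<\epsilon$ for all $z\in\Bbb C^+$ with $0<|z-x_0|<\delta_0$. Let $0<h<\delta_0$ be such that $x_0+h$ is a continuity point of $\mu$ (the set of such $h$ is co-countable, hence dense). For $\xi\in[x_0,x_0+h]$ and $0<\eta<\sqrt{\delta_0^2-h^2}$ the point $\xi+i\eta$ lies in the punctured disc, so $h(L-\epsilon)\le\int_{x_0}^{x_0+h}\Im m_G(\xi+i\eta)\,d\xi\le h(L+\epsilon)$; letting $\eta\downarrow0$ and invoking \eqref{STinv} (valid since $x_0$ and $x_0+h$ are continuity points), together with $\mu(\{x_0\})=0$,
$$\frac{h}{\pi}(L-\epsilon)\ \le\ G(x_0+h)-G(x_0)\ =\ \mu([x_0,x_0+h])\ \le\ \frac{h}{\pi}(L+\epsilon),$$
and the same argument on $[x_0-h,x_0]$ gives $\frac{h}{\pi}(L-\epsilon)\le G(x_0)-G(x_0-h)\le\frac{h}{\pi}(L+\epsilon)$ for continuity points $x_0-h$. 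To pass to an arbitrary small $h>0$, sandwich it between continuity points $h^-<h<h^+$ with $h^{\pm}/h\to1$: monotonicity of $G$ then forces $\limsup_{h\downarrow0}\big|\big(G(x_0+h)-G(x_0)\big)/h-L/\pi\big|\le\epsilon/\pi$, and similarly from the left. Since $\epsilon>0$ is arbitrary, $G$ is differentiable at $x_0$ with $G'(x_0)=L/\pi=\tfrac1\pi\Im m_G(x_0)$.

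The only genuinely delicate point is the interpretation of the hypothesis: a merely radial limit of $\Im m_G$ at $x_0$ would not suffice — a measure whose density near $x_0$ equals $2c$ on the right of $x_0$ and $0$ on the left has $\Im m_G(x_0+iv)\to\pi c$ yet is not differentiable at $x_0$ — so one genuinely needs the unrestricted limit, which is exactly what makes the uniform bound over the shrinking punctured disc (the key input to the estimate above) available. Everything else, in particular the reduction from continuity points to all $h$, is routine bookkeeping.
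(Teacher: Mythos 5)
Your proof is correct. The paper does not prove this lemma; it cites Theorem 2.1 of Silverstein and Choi (1995), so there is no in-paper argument to compare against. Your derivation — first deduce $\mu(\{x_0\})=0$ from finiteness of the limit so that $x_0$ is a continuity point, then use the inversion formula \eqref{STinv} together with the uniform bound $|\Im m_G-L|<\epsilon$ on a shrinking punctured disc to trap $G(x_0\pm h)-G(x_0)$ between $(L\mp\epsilon)h/\pi$ and $(L\pm\epsilon)h/\pi$, and finally extend from continuity points $x_0+h$ to all $h$ by monotonicity — is the standard route and is essentially the one taken in the cited reference. Your remark distinguishing the unrestricted limit from the merely radial one is also correct and worth retaining: a density with a one-sided jump at $x_0$ has $\Im m_G(x_0+iv)\to\pi$ times the average of the one-sided densities while $G$ has unequal one-sided derivatives, so the hypothesis really is needed in the unrestricted form in order to obtain the uniform control over $\xi\in[x_0,x_0+h]$ that drives the estimate.
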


\section{Proofs of the Theorem and Corollary}
We begin by performing a series of truncations and centralizations on the entries of $X_n$ and a truncation on the entries of $D_n$

Let
$$\widetilde B_n=\frac1N(D_n\circ\widetilde X_n)(D_n\circ\widetilde X_n)^*,$$
where
$$\hbox{$\widetilde X_n$}_{jk}=\xn_{jk}I(|\xn_{jk}|\leq\eta_n\sqrt n).$$

Then from Lemma \ref{L.1.3}

$$\|F^{B_n}-F^{\widetilde
 B_n}\|\leq\frac1n\sum_{jk}I(|\xn_{jk}|\ge\eta_n\sqrt n).$$
We have by   \eqref{1.2}
$$\EE\bigg(\frac1n\sum_{jk}I(|\xn_{jk}|\ge\eta_n\sqrt n)\biggr)\leq\frac1{\eta_n^2n^2}\sum_{jk}\EE|\xn_{jk}|^2I(|\xn_{jk}|\ge\eta_n\sqrt n)=o(1),$$
and
$$\var\biggl(\frac1n\sum_{jk}I(|\xn_{jk}|\ge\eta_n\sqrt n)\biggr)\leq\frac1{\eta_n^2n^3}\sum_{jk}\EE|\xn_{jk}|^2I(|\xn_{jk}|\ge\eta_n\sqrt n)=o(1/n).$$
Therefore, from Lemma \ref{L.1.4}, for arbitrary positive $\epsilon$ we have for all $n$ large
\begin{multline*}
\P\biggl(\frac1n\sum_{jk}I(|\xn_{jk}|\ge\eta_n\sqrt n)>\epsilon)\leq\exp \left(\frac{-(\epsilon-o(1))^2}{2(o(1/n)+\frac{(1/n)(\epsilon-o(1))}3)}\right)\hfill\\ \hfill
\leq \exp\left(\frac{-n\epsilon^2}{8(1+\epsilon/3)}\right),
\end{multline*}
which is summable.  Therefore, $\|F^{B_n}-F^{\widetilde B_n}\|\asarrow0$ as $n\to\infty$.



For fixed $\epsilon>0$, define $d=d_{\epsilon}\ge M_{\epsilon}$ and  $\widetilde d^n_{jk}=\widetilde d^{n\epsilon}_{jk}$ as in the statement of Theorem 1.1, and let $\widetilde D_n=(\widetilde d^n_{jk})$.
From Lemma \ref{L.1.2} we have
$$\|F^{\widetilde B_n}-F^{(1/N)(\widetilde D_n\circ \widetilde X_n)(\widetilde D_n\circ \widetilde X_n)^*}\|
\leq\frac1n rank(\widetilde X_n\circ(D_n-\widetilde D_n))$$
The matrix $\widetilde X_n\circ(D_n-\widetilde D_n)$ has at most $\#E^n_{r\epsilon}$ nonzero rows and  $\#E^n_{c\epsilon}$ columns. Therefore from Lemma \ref{Lrowcol} we have
$$\|F^{\widetilde B_n}-F^{(1/N)(\widetilde D_n\circ \widetilde X_n)(\widetilde D_n\circ \widetilde X_n)^*}\|\leq\epsilon.$$


By    \eqref{1.1} and Lemma \ref{L.1.1} we have
\begin{multline*} D^2\biggl(F^{(1/\sqrt N)\widetilde D_n\circ\widetilde X_n}_{sing},F^{(1/\sqrt N)\widetilde D_n\circ(\widetilde X_n-\EE\widetilde X_n)}_{sing}\biggr)\leq d^2\frac1{nN}\sum_{jk}|\EE\xn_{jk}I(|\xn_{jk}
\leq\eta_n\sqrt n)|^2\hfill\\ \hfill =d^2\frac1{nN}\sum_{jk}|\EE\xn_{jk}I(|\xn_{jk}>\eta_n\sqrt n)|^2  \leq d^2\frac1{nN}\sum_{jk}\EE(|\xn_{jk}|^2I(|\xn_{jk}|\ge\eta_n\sqrt n)\to0\end{multline*}
by   \eqref{1.2}.
 Since  the set of subprobability measures is sequentially compact in vague topology with metric $D$ and taking square roots of   non-negative random variables is a continuous function, we have
$D(F^{\widetilde B_n},F^{(1/N)(\widetilde D_n\circ (\widetilde X_n-\EE\widetilde X_n))(\widetilde D_n\circ (\widetilde X_n-\EE\widetilde X_n))^*})\to0$ as $n\to\infty$.

Let
$$\widehat X_n=\left(\frac{\xn_{jk}I(|\xn_{jk}|\leq\eta_n\sqrt n) -\EE(\xn_{jk}I(|\xn_{jk}|\leq\eta_n\sqrt n))}{\sigma_{jk}}\right),$$
where $\sigma^2_{jk}=\sigma^{2(n)}_{jk}=\EE|\xn_{jk}I(|\xn_{jk}|\leq\eta_n\sqrt n)-\EE(\xn_{jk}I(|\xn_{jk}|\leq\eta_n\sqrt n))|^2$ (if $\sigma_{jk}=0$, then define the corresponding entry of $\widehat X_n$ to be zero).  Notice that $\sigma_{jk}\leq1$ and, by
  \eqref{1.3}, is $>\sqrt f$ for all $n$ large.
Then, again, by  \eqref{1.1} and Lemma \ref{L.1.1} we have
\begin{multline*} D^2(\biggl(F^{(1/\sqrt N)\widetilde D_n\circ\widehat X_n}_{sing},F^{(1/\sqrt N)\widetilde D_n\circ(\widetilde X_n-\EE\widetilde X_n)}_{sing}\biggr)\hfill\\ \hfill\leq d^2\frac1{nN}\sum_{jk}(1-\sigma_{jk}^{-1})^2|\xn_{jk}I(|\xn_{jk}|\leq\eta_n\sqrt n) -\EE(\xn_{jk}I(|\xn_{jk}|\leq\eta_n\sqrt n))|^2\equiv d^2a(n).\end{multline*}
We have
\begin{multline*}\EE a(n)=\frac{1}{nN}\sum_{jk}(1-\sigma_{jk})^2\leq \frac{1}{nN}\sum_{jk}(1-\sigma_{jk}^2)\hfill\\ \hfill=\frac{1}{nN}\sum_{jk}\EE|\xn_{jk}|^2I(\xn_{jk}|>\eta_n\sqrt n)+|\EE \xn_{jk}I(|\xn_{jk}>\eta_n\sqrt n)|^2\hfill\\ \hfill \leq \frac{2}{nN}\sum_{jk}\EE|\xn_{jk}|^2I(\xn_{jk}|\ge\eta_n\sqrt n)\to0,\end{multline*}
as $n\to\infty$, by  (2.1).

Let $a_{jk}=|\xn_{jk}I(|\xn_{jk}|\leq\eta_n\sqrt n) -\EE(\xn_{jk}I(|\xn_{jk}|\leq\eta_n\sqrt n))|^2$.  Using Lemma \ref{L.1.5} and   \eqref{1.3} for all $n$ large we have
\begin{multline*}\EE|a(n)-\EE a(n)|^4\hfill\\ \hfill\leq \frac{C_4}{(nN)^4}\biggr(\sum_{jk}(1-\sigma_{jk}^{-1}))^8\EE|a_{jk}-\EE a_{jk}|^4+\biggl(\sum_{jk}(1-\sigma_{jk}^{-1}))^4\EE|a_{jk}-\EE a_{jk}|^2\biggr)^2\biggr)\hfill\\
\hfill\leq\frac{C'}{n^8}\biggl(\sum_{jk}\EE|\xn_{jk}|^8I(|\xn_{jk}|\leq\eta_n\sqrt n)+\biggl(\sum_{jk}\EE|\xn_{jk}|^4I(|\xn_{jk}|\leq\eta_n\sqrt n)\biggr)^2\biggr)\hfill\\ \hfill
\leq\frac{C''}{n^8}(n^5\eta_n^6+n^6\eta_n^4),\end{multline*}
which is summable.

 Therefore, we conclude that 
 \begin{equation}\label{Dep}\limsup_n D(F^{B_n},F^{(1/\sqrt N)(\widetilde D_n\circ\widehat X_n)(\widetilde D_n\circ\widehat X_n)^*})\leq\epsilon\quad a.s.\end{equation}


 From this point, for ease of notation, we will denote the augmented matrix also by $B_n=(1/\sqrt N)(D_n\circ X_n)(D_n\circ X_n)^*$, 
 where we will assume that $\xn_{jk}$, the entries of $X_n$ satisfy
\begin{enumerate}
\item For each $n$ they are independent.
\item $\EE(\xn_{jk})=0$, $\EE|\xn_{jk}|^2=1$.
\item $|\xn_{jk}|\leq\eta_n\sqrt n$,\hfill\break
where the present $\eta_n$ are double the original $\eta_n$,
\end{enumerate}
and that the elements of $D_n$ are nonnegative and bounded by $d=d_{\epsilon}$.   Keep in mind the difference between the 
two $B_n$'s, along with the bound \eqref{Dep}.
 
Let $x_k$ denote the $k-$th column of $X_n$.   Let $D_k$ denote the $n\times n$ diagonal matrix consisting of the entries in the $k-$th column of $D_n$.   Then we can write
$$B_n=\frac1N\sum_{k\leq N}D_kx_kx_k^*D_k.$$
Let $\EE_0(\cdot)$ denote expectation and $\EE_k(\cdot)$ denote conditional expectation with respect to the $\sigma$-field generated by $x_1,\ldots,x_k$.  Let for $k\leq N$
$$B_{(k)}=\frac1N\sum_{j\neq k}D_jx_jx_j^*D_j.$$
Let $m_n(z)$, $z=x+iv$, $v>0$ denote the Stieltjes transform of the eigenvalues of $B_n$.   We have
$$m_n(z)=\frac1n\tr(B_n-zI)^{-1}.$$
The first step is to show
\begin{equation}
  \label{1.4}
 m_n(z)-\EE m_n(z)\to0\quad\text{a.s. as $n\to\infty$}.
\end{equation}
We have
\begin{multline*} m_n(z)-\EE m_n(z)=\frac1n\sum_{k=1}^N[\EE_k\tr(B_n-zI)^{-1}-\EE_{k-1}\tr(B_n-zI)^{-1}]
\hfill\\ \hfill=\frac1n\sum_{k=1}^N[\EE_k(\tr(B_n-zI)^{-1}-\tr(B_{(k)}-zI)^{-1})-\EE_{k-1}(\tr(B_n-zI)^{-1}-\tr(B_{(k)}-zI)^{-1})]\hfill\\ \hfill=\frac1n\sum_{k=1}^N\gamma_k.\end{multline*}
By Lemma \ref{L.1.8} we have each $|\gamma_k|\leq 2/v$.   Since the $\gamma_k$ form a martingale difference sequence, we have by Lemma \ref{L.1.6}
$$\EE|m_n(z)-\EE m_n(z)|^4\leq \frac{C_p}{n^4}\EE\left(\sum_{k=1}^N|\gamma_k|^2\right)^2\leq\frac{4C_pN^2}{v^4n^4},$$
which is summable.   Therefore we have   \eqref{1.4}.

We turn to $\EE m_n(z)$.  Let
$$F=\frac1N\sum_{k=1}^N\frac1{1+\frac nNe_k}D_k^2\quad\text{where}\quad e_k=\frac1n\EE\tr D_k(B_{(k)}-zI)^{-1}D_k.$$
Write
$$B_n-zI-(F-zI)=\frac1N\sum_{k=1}^ND_kx_kx_k^*D_k-F.$$
Taking inverses and using Lemma \ref{L.1.9} we get
\begin{multline}\label{1.5} (F-zI)^{-1}-(B_n-zI)^{-1}\hfill\\ \hfill=\frac1N\sum_{k=1}^N\biggl[\frac1{1+\frac1Nx_k^*D_k(B_{(k)}-zI)^{-1}D_kx_k}(F-zI)^{-1}D_kx_kx_k^*D_k(B_{(k)}-zI)^{-1}\hfill\\ \hfill
-(F-zI)^{-1}F(B_n-zI)^{-1}\biggr]
\end{multline} 
Taking the trace, and dividing by $n$, we get
\begin{multline*}\frac1n\tr(F-zI)^{-1}-m_n(z)=\frac1N\sum_{k=1}^N\biggl[\frac{\frac1nx_k^*D_k(B_{(k)}-zI)^{-1}(F-zI)^{-1}D_kx_k}{{1+\frac1Nx_k^*D_k(B_{(k)}-zI)^{-1}D_kx_k}}\hfill\\ \hfill
-\tr\frac1n(F-zI)^{-1}F(B_n-zI)^{-1}\biggr]\end{multline*}
$$=\frac1N\sum_{k=1}^N\left[\frac{\frac1nx_k^*D_k(B_{(k)}-zI)^{-1}(F-zI)^{-1}D_kx_k}{{1+\frac nN\frac1nx_k^*D_k(B_{(k)}-zI)^{-1}D_kx_k}}-\frac{\frac1n\tr(F-zI)^{-1}D_k^2(B_n-zI)^{-1}}{1+\frac nNe_k}\right]$$
$$=\frac1N\sum_{k=1}^N\alpha_k+\beta_k+\gamma_k,$$
where
$$\alpha_k=\frac{\frac1nx_k^*D_k(B_{(k)}-zI)^{-1}(F-zI)^{-1}D_kx_k}{{1+\frac nN\frac1nx_k^*D_k(B_{(k)}-zI)^{-1}D_kx_k}}-\frac{\frac1nx_k^*D_k(B_{(k)}-zI)^{-1}(F-zI)^{-1}D_kx_k}{{1+\frac nNe_k}},$$
$$\beta_k=\frac{\frac1nx_k^*D_k(B_{(k)}-zI)^{-1}(F-zI)^{-1}D_kx_k}{{1+\frac nNe_k}}-\frac{\frac1n\tr D_k(B_{(k)}-zI)^{-1}(F-zI)^{-1}D_k}{{1+\frac nNe_k}},$$
and
$$\gamma_k=\frac{\frac1n\tr D_k(B_{(k)}-zI)^{-1}(F-zI)^{-1}D_k}{{1+\frac nNe_k}}-\frac{\frac1n\tr(F-zI)^{-1}D_k^2(B_n-zI)^{-1}}{1+\frac nNe_k}.$$
Notice that the spectral norms of $(B_n-z)^{-1}$ and $(B_{(k)}-zI)^{-1}$ are bounded by $1/v$.  Also notice that $\Im e_k>0$ so that $\Im F<0$.  This implies $\|(F-zI)^{-1}\|<1/v$.

We have $\EE\beta_k=0$.   From Lemmas \ref{L.1.7} and \ref{L.1.8} we have
$$|\gamma_k|\leq\frac1n\frac{d^2|z|}{v^3}\to0.$$
as $n\to\infty$.

From Lemmas \ref{L.1.7}, \ref{L.1.10}, and the Cauchy-Schwarz inequality we have
\begin{multline*}|\EE\alpha_k|\leq\frac{n|z|^2}{Nv^2}((1/n^2)\EE|x_k^*D_k(B_{(k)}-zI)^{-1}(F-zI)^{-1}D_kx_k|^2)^{1/2}\hfill\\ \hfill((1/n^2)(\EE|x_k^*D_k(B_{(k)}-zI)^{-1}D_kx_k-\EE\tr D_k(B_{(k)}-zI)^{-1}D_k|^2)^{1/2}.\end{multline*}
We have
\begin{multline}
\label{2.7}
(1/n^2)\EE|x_k^*D_k(B_{(k)}-zI)^{-1}(F-zI)^{-1}D_kx_k|^2\leq\frac{d_n^4}{v^4n^2}\EE\|x_k\|^4\hfill\\ \hfill=\frac{d^4}{v^4n^2}\left(\sum_{j=1}^n\EE |x_{jk}|^4+n(n-1)\right)\leq\frac{d^4}{v^4n^2}(\eta_n^2n^2+n(n-1))=O(1)\end{multline}
We have
\begin{multline*}((1/n^2)(\EE|x_k^*D_k(B_{(k)}-zI)^{-1}D_kx_k-\EE\tr D_k(B_{(k)}-zI)^{-1}D_k|^2\hfill\\ \hfill\leq(2/n^2)(\EE|x_k^*D_k(B_{(k)}-zI)^{-1}D_kx_k-\tr D_k(B_{(k)}-zI)^{-1}D_k|^2\hfill\\ \hfill+(2/n^2)\EE|\tr D_k(B_{(k)}-zI)^{-1}D_k-\EE\tr D_k(B_{(k)}-zI)^{-1}D_k|^2.\end{multline*}
Using Lemma \ref{L.1.10} we have
\begin{multline}\label{2.8}(1/n^2)(\EE|x_k^*D_k(B_{(k)}-zI)^{-1}D_kx_k-\tr D_k(B_{(k)}-zI)^{-1}D_k|^2\hfill\\ \hfill\leq (C/n^2)(\eta_n^2n)\EE\tr D_k(B_{(k)}-zI)^{-1}D_k^2(B_{(k)}-\bar zI)^{-1}D_k\hfill\\ \hfill
\leq(C/n^2)(\eta_n^2n)n\EE\|D_k(B_{(k)}-zI)^{-1}D_k^2(B_{(k)}-\bar zI)^{-1}D_k\|\leq C\eta_n^2\to0\end{multline}
as $n\to\infty$.
As before, we can write $(1/n)(\tr D_k(B_{(k)}-zI)^{-1}D_k-\EE\tr D_k(B_{(k)}-zI)^{-1}D_k)$ as a sum of martingale differences, where now each term is bounded in absolute value by $2d^2/v$.  Using Lemma \ref{L.1.6} we find then, that
$$(1/n^2)\EE|\tr D_k(B_{(k)}-zI)^{-1}D_k-\EE\tr D_k(B_{(k)}-zI)^{-1}D_k|^2=O(1/n).$$
We conclude that
$$\EE m_n(z)-(1/n)\tr(F-zI)^{-1}\to0,$$
as $n\to\infty.$

In   \eqref{1.5}, multiply both sides on the left and right by $D_j$.  Taking the trace and dividing by $n$ we get
\begin{multline*}\frac1n\tr D_j(F-zI)^{-1}D_j-\frac1n\tr D_j(B_n-zI)^{-1}D_j\hfill\\ \hfill
=\frac1N\sum_{k=1}^N\left[\frac{\frac1nx_k^*D_k(B_{(k)}-zI)^{-1}D_j^2(F-zI)^{-1}D_kx_k}{{1+\frac nN\frac1nx_k^*D_k(B_{(k)}-zI)^{-1}D_kx_k}}\right.\hfill\\ \hfill
\left.-\frac{\frac1n\tr  D_j(F-zI)^{-1}D_k^2(B_n-zI)^{-1}D_j}{1+\frac nNe_k}\right].\end{multline*}
In approaching this sum in the same way as before we can conclude that
$$\EE (1/n)D_j(B_n-zI)^{-1}D_j-(1/n)\tr D_j(F-zI)^{-1}D_j\to0$$
as $n\to\infty$. Using Lemma \ref{L.1.8} we have
$$e_j(z)-(1/n)\tr D_j(F-zI)^{-1}D_j\to0$$
as $n\to\infty$.

Thus we have, expressed in terms of the entries of $D_n$
\begin{equation}\label{mlimit}m_n(z)-\frac1n{   \sum_{i=1}^n}\frac1{ \frac1N\sum_{k=1}^N\frac{d_{ik}^2}{1+\frac nNe_k(z)}-z}\to0\end{equation}
a.s. as $n\to\infty$, where the $e_j(z)$ satisfy
\begin{equation}
  \label{elimit}
  e_j(z)-\frac1n\sum_{i=1}^n\frac{d_{ij}^2}{ \frac1N\sum_{k=1}^N\frac{d_{ik}^2}{1+\frac nNe_k(z)}-z}\to0
\end{equation}
as $n\to\infty$.

\medskip



We shall prove that for fixed $n$ and $N$ there exist $e_1^0(z),\dots,e_N^0(z)\in\Bbb C^+$ that satisfies \eqref{Ge0def}.
Consider the matrix $B_{n\ell}=(1/(N\ell))(D^{(\ell)}\circ X^{(\ell)})(D^{(\ell)}\circ X^{(\ell)})^*$ where $X^{(\ell)}$ now is $n\ell\times N\ell$ containing i.i.d bounded standardized variables, and $D^{(\ell)}$ is $n\ell\times N\ell$ containing $\ell^2$ copies of $D_n$.
 Then  \eqref{elimit}  holds a.s. as $\ell\to\infty$. Notice that for any positive integer $k\leq N$ columns $k,N+k,\ldots,(\ell-1)N+k$ of $D^{(\ell)}$ are identical.  Using Lemma
 \ref{L.1.8} we see that
$$|e_{m_1N+k}(z)-e_{m_2N+k}(z)|\leq\frac{2d^2}{n\ell v}\quad 0\leq m_1,m_2\leq\ell-1.$$

Consider one realization for which  \eqref{elimit}  holds.  Since the $e_k(z)$'s are bounded by $d^2/v$, we can find a subsequence of $\{\ell\}$ such that $e_1(z),\ldots,e_N(z)$ converge to $e^0_1(z),\dots,e^0_N(z)$ on this subsequence.  We have then for $1\leq i\leq n\ell$
$$\frac1{N\ell}\sum_{k=1}^{N\ell}\frac{d^{(\ell)2}_{ik}}{1+\frac nNe_k(z)}=\frac1N\sum_{k=1}^N\frac1{\ell}\sum_{m=1}^{\ell}\frac{d_{ik}^2}{1+\frac nNe_{mN+k}(z)}
\to\frac1N\sum_{k=1}^N\frac{d^2_{ik}}{1+\frac nNe^0_k(z)},$$
on this subsequence.   Since the $e_k(z)$ have positive imaginary part, the above limit has nonpositive imaginary part.  Therefore the right side of \eqref{Ge0def} has positive imaginary part.

Fix $j\leq N$. Using the fact that the $d_{ij}^{(\ell)}$'s repeat as $i$ ranges from 1 to $n\ell$, from \eqref{elimit} we get  
$$e_j(z)-\frac1{n\ell}\sum_{i=1}^{n\ell}\frac{{d_{ij}^{(\ell)}}^2}{\frac1{N\ell}\sum_{k=1}^{N\ell}\frac{d^{(\ell)2}_{ik}}{1+\frac nNe_k(z)}-z}=e_j(z)-\frac1n\sum_{i=1}^{n}\frac{d_{ij}^2}{\frac1{N\ell}\sum_{k=1}^{N\ell}\frac{d^{(\ell)2}_{ik}}{1+\frac nNe_k(z)}-z}\to0,$$
so we get \eqref{Ge0def} with each $e_j(z)\in\Bbb C_+$.


We will now show the uniqueness of the $e_k^0(z)$ in  \eqref{Ge0def} having positive imaginary parts.  Notice that from \eqref{Ge0def}  the $e_k^0(z)$ are also bounded in absolute value by 
$d^2/v$.   Let $e^0_{j,2}(z)$ denote the imaginary part of $e^0_j(z)$.  Then
$$e^0_{j,2}(z)=\frac1n\sum_{i=1}^nd_{ij}^2\frac{\frac1N\sum_{k=1}^Nd_{ik}^2\frac{\frac nNe^0_{k,2}(z)}{|1+\frac nNe^0_k(z)|^2}+v}{\left|\frac1N\sum_{\underline k=1}^N\frac{d_{i\underline k}^2}{1+\frac nNe^0_{\underline k}(z)}-z\right|^2}.$$
Let $C^0=\{c^0_{jk}\}$ $N\times N$ and $b^0=(b_1^0,\ldots,b_N^0)^T$, where
$$c^0_{jk}=\frac1{N^2}\sum_{i=1}^nd^2_{ij}d^2_{ik}\frac{\frac1{|1+\frac nNe_k^0(z)|^2}}{\left|\frac1N\sum_{\underline k=1}^N\frac{d_{i\underline k}^2}{1+\frac nNe^0_{\underline k}(z)}-z\right|^2}$$
and
$$b^0_j=\frac1{n}\sum_{i=1}^n\frac{d^2_{ij}}{\left|\frac1N\sum_{\underline k=1}^N\frac{d_{i\underline k}^2}{1+\frac nNe^0_{\underline k}(z)}-z\right|^2}.$$
Let $e^0_2=(e^0_{1,2}(z),\ldots,e^0_{N,2}(z))^T$.  Then
\begin{equation}
  \label{1.8} e^0_2=C^0e^0_2+vb^0.
\end{equation}


Notice because of the nonzero assumption on any column of $D_n$, equation   \eqref{1.8}  has all components of $e_2^0(z)$ and $b^0$ positive 
 

Suppose $\rho(C^0)$, the spectral radius of $C^0$, is greater than or equal to 1.  Then by Lemma \ref{L.1.11} $\rho(C^0)$ is an eigenvalue of $C^0$ with left eigenvector $y^T$ containing nonnegative entries, and necessarily, at least one entry is positive.
Multiplying on the left of both sides of   \eqref{1.8}  by $y^T$ we get
\begin{equation}\label{ye2}y^Te_2^0=\rho(C^0)y^Te_2^0+vy^Tb^0.\end{equation}
Since both $y^Te_2^0$ and $y^Tb^0$ are positive, we arrive at a contradiction.  Therefore
\begin{equation}
  \label{1.9}
 \rho(C^0)<1.
\end{equation}
Suppose $\underline e_1^0(z),\ldots,\underline e_1^0(z)$ also satisfy   \eqref{Ge0def}.   Let $\underline e_2^0$ and $\underline C^0=\{\underline c_{jk}^0\}$ denote the quantities analgous to $e^0_2$ and $C^0$.  Then for nonzero $e_j^0(z)$ and $\underline e_j^0$(z)
we have
$$e_j^0-\underline e_j^0=\sum_{k=1}^Na_{jk}(e_k^0(z)-\underline e_k^0(z)),$$
where
\begin{multline}\label{Adef}a_{jk}=\frac1{N^2}\frac1{(1+\frac nNe^0_k(z))(1+\frac nN\underline e^0_k(z))}\hfill\\ \hfill
\times \sum_{i=1}^n\frac{d_{ij}^2d_{ik}^2}{\left(\frac1N\sum_{\underline k=1}^N\frac{d_{i\underline k}^2}{1+\frac nNe_{\underline k}^0(z)}-z\right)\left(\frac1N\sum_{\underline k=1}^N\frac{d_{i\underline k}^2}{1+\frac nN\underline e_{\underline k}^0(z)}-z\right)}.\end{multline}
Therefore, with $e^0=(e_1^0(z),\ldots,e_N^0(z))^T$ and  $\underline e^0=(\underline e_1^0(z),\ldots,\underline e_N^0(z))^T$ 
we have
\begin{equation}\label{e0diff}e^0-\underline e^0=A(e^0-\underline e^0),\end{equation}
where $A=(a_{jk})$.    If $e^0\neq\underline e^0$ then $A$ has an eigenvalue equal to 1.   However applying Cauchy-Schwarz we see that
$$|a_{jk}|\leq {c_{jk}^0}^{\frac12}{\underline c^0_{jk}}^{\frac12},$$
and applying Lemmas \ref{L.1.12} and \ref{L.1.13}
$$\rho(A)\leq\rho({c_{jk}^0}^{\frac12}{\underline c^0_{jk}}^{\frac12})\leq(\rho(C^0))^{\frac12}(\rho(\underline C^0))^{\frac12}<1,$$
by   \eqref{1.9}, a contradiction.  Therefore we have $e^0=\underline e^0$.

Using the last part of the above argument, we will show that $e^0(z)$ is a continuous function of $z\in\Bbb C^+$.  
Let $\{z_n\}$ and $\{z'_n\}$ be two sequences in $\Bbb C^+$ each converging to $z\in\Bbb C^=$.  Let $e^0(z)=(e_1^0(z),\ldots,e_N^0(z))^T$.  Then $e^0(z_n)$ and $e^0(z'_n)$ satisfy
\begin{equation}\label{e0ndiff}e^0(z_n)-e^0(z'_n)=A(z_n,z'_n)(e^0(z_n)-e^0(z'_n))+(z_n-z'_n)b(z_n,z'_n),\end{equation}
where 
\begin{multline*} A(z_n,z'_n)_{jk}\hfill\\=\frac1{N^2}\frac1{(1+\frac nNe^0_k(z_n))(1+\frac nN e^0_k(z'_n))}\hfill\\ \hfill\sum_{i=1}^n\frac{d_{ij}^2d_{ik}^2}{\left(\frac1N\sum_{\underline k=1}^N\frac{d_{i\underline k}^2}{1+\frac nNe^0_{\underline k}(z_n)}-z_n\right)\left(\frac1N\sum_{\underline k=1}^N\frac{d_{i\underline k}^2}{1+\frac nNe_{\underline k}^0(z'_n)}-z'_n\right)}.\end{multline*}
and
$$b(z_n,z'_n)_j=\frac1n\sum_{i=1}^n\frac{d_{ij}^2}{\left(\frac1N\sum_{\underline k=1}^N\frac{d_{i\underline k}^2}{1+\frac nNe^0_{\underline k}(z_n)}-z_n\right)\left(\frac1N\sum_{\underline k=1}^N\frac{d_{i\underline k}^2}{1+\frac nNe_{\underline k}^0(z'_n)}-z'_n\right)}.$$
Assume $e^0(z_n)\to e^0$ and $e^0(z'_n)\to\underline e^0$ as $n\to\infty$.  Since all quantities are bounded we get as $n\to\infty$ \eqref{e0diff},
and therefore $e^0=\underline e^0$, which proves continuity of $e^0(z)$, $z\in\Bbb C^+$.  

We claim that each $e_k(z)$ is the Stieltjes transform of a measure with total mass $(1/n)\tr D_k^2$.  Indeed, for all $z$ with imaginary part $\ge v>0$ any difference quotient
\begin{multline*}\frac{\frac1n\tr D_k(B_{(k)}-z_1I)^{-1}D_k-\frac1n\tr D_k(B_{(k)}-z_2I)^{-1}D_k}{z_1-z_2}\hfill\\ \hfill=\frac1n
\tr D_k(B_{(k)}-z_1I)^{-1}(B_{(k)}-z_2I)^{-1}D_k\end{multline*}
is uniformly bounded.  Therefore, by the dominated convergence theorem $e_k$ is differentiable for all $z\in\Bbb C^+$, and hence is analytic  on $\Bbb C^+$.  Moreover it is clear that $e_k$ maps $\Bbb C^+$ into $\Bbb C^+$ and, again by the dominated convergence theorem,  the limit of $ze_k(z)$ as $z\to\infty$ is $-(1/n)\tr D_k^2$.  Therefore by Lemma \ref{stielt} the claim is proven.

Now, since, from the above existence argument, for each $n$, $e_k^0$ is the limit of a sequence of $e_k$'s we have, using the Helly selection theorem, $e_k^0$ is itself the Stieltjes transform of a measure with total mass $(1/n)\tr D_k^2$.  Therefore the function $G_n(z)$ is analytic in $\Bbb C^+$, maps $\Bbb C^+$ to $\Bbb C^+$, and as $z\to\infty$, $zG_n(z)\to-1$.  Therefore, by Lemma \ref{stielt} $G_n(z)$ is the Stieltjes transform of a probability measure, $F_n^0$, on $\Bbb R$.

 We will proceed to show that the $e_k(z)$ in \eqref{mlimit} can be replaced with the $e_k^0(z)$, that is, we will show that
 \begin{equation}
 \label{me0limit}
 m_n(z)-G_n(z)=m_n(z)-\frac1n{\sum_{i=1}^n}\frac1{ \frac1N\sum_{k=1}^N\frac{d_{ik}^2}{1+\frac nNe_k^0(z)}-z}\to0
 \end{equation}
almost surely.  We have

\begin{equation}
\label{ee0diff}
\frac1n{   \sum_{i=1}^n}\frac1{ \frac1N\sum_{k=1}^N\frac{d_{ik}^2}{1+\frac nNe_k(z)}-z}-\frac1n{   \sum_{i=1}^n}\frac1{ \frac1N\sum_{k=1}^N\frac{d_{ik}^2}{1+\frac nNe_k^0(z)}-z}\end{equation}
\begin{multline*}=\frac1{N^2}\sum_{k=1}^N\frac{(e_k(z)-e_k^0(z))}{(1+\frac nNe_k(z))(1+\frac nNe_k^0(z))}\hfill\\ \hfill\times\sum_{i=1}^n\frac{d_{ik}^2}
{\left(\frac1N\sum_{\underline k=1}^N\frac{d_{i\underline k}^2}{1+\frac nNe_{\underline k}(z)}-z\right)\left(\frac1N\sum_{\underline k=1}^N\frac{d_{i\underline k}^2}{1+\frac nNe_{\underline k}^0(z)}-z\right)}.\end{multline*}

   Returning now to
$C^0$, $b^0$, and $e_2^0$, we see that the entries of $b^0$ are uniformly bounded.  Let $\bfone_m$ denote the $m$ dimensional vector containing all one's, and let ``$\leq$" between two vectors denote entry wise $\leq$.   Using Lemma \ref{L.1.7}
we see that
\begin{equation}\label{C0b0rel}
C^0\bfone_N\leq\left(\frac nN\frac{|z|^2}{v^2}\max_i\frac1N\sum_{\underline k=1}^Nd_{i\underline k}^2\right)b^0.
\end{equation}
Since the entries of $e_2^0$ are uniformly bounded we see from \eqref{1.8} that
\begin{equation}\label{e2bbd}
e_2^0\leq k_1b^0.
\end{equation}
From \eqref{ye2}, where $y$ is the nonnegative eigenvector of $C^0$ associated with $\rho(C^0)$ we get
\begin{equation}\label{rhoC0bd}
1-\rho(C^0)=\frac{vy^Tb^0}{y^Te_2^0}\ge k,\end{equation}
where necessarily $k\in(0,1).$
Let $\omega\in\Bbb R^n$ have for its $j$-th entry the imaginary part of the lefthand side of \eqref{elimit}.   From condition \eqref{1.3} we see that
$\eta_n$ approaches zero more slowly than $1/n$.   Thus, from the derivation of \eqref{elimit} it is clear that
\begin{equation}\label{omegabd}|\omega|\leq k_1\eta_n\bfone_N.\end{equation}

Let $e_{j,2}(z)$ denote the imaginary part of $e_j(z)$ and $e_2=(e_{1,2},\ldots,e_{N,2})^T$.   Similar to $C^0$ and $b^0$ we have
\begin{equation}\label{Cebomega}
e_2=Ce_2+vb+\omega,\end{equation}
where
$$c_{jk}=\frac1{N^2}\sum_{i=1}^nd_{ij}^2d_{ik}^2\frac{\frac1{|1+\frac nNe_k(z)|^2}}{\left|\frac1N\sum_{\underline k=1}^N\frac{d_{i\underline k}^2}{1+\frac nNe_{\underline k}(z)}-z\right|^2}$$
and
$$b_j=\frac1{n}\sum_{i=1}^n\frac{d_{ij}^2}{\left|\frac1N\sum_{\underline k=1}^N\frac{d_{i\underline k}^2}{1+\frac nNe_{\underline k}(z)}-z\right|^2}.$$
We will show
\begin{equation}\label{specradcon}\limsup_{n\to\infty}\rho(C)\leq1.\end{equation}
Rather than work with $C$ we consider $C_s\equiv E^{-1}CE$ where $E=\text{diag}(|1+\frac nNe_1(z)|,\ldots,|1+\frac nNe_N(z)|)$. Notice $C_s$ is a symmetric matrix and its eigenvalues are the same as $C$.  From Lemma 1.7 and the fact that $|1+\frac nNe_j(z)|\leq 1 + \frac 1{Nv}\sum_{i=1}^n{d_{ij}^2}$, we see that the diagonal entries of $E$ and $E^{-1}$ are uniformly bounded.   Let $f=E^{-1}e_2$, $\underline b=E^{-1}vb$, and $\underline\omega=E^{-1}\omega$.  Then from \eqref{Cebomega} we have
\begin{equation}\label{Csfbpmega}f=C_sf+\underline b+\underline\omega.\end{equation}
We have the entries of $\underline b$ uniformly bounded,
$$|\underline\omega|\leq k_2\eta_n\bfone_N$$ and, similar to \eqref{C0b0rel}, we have
\begin{equation}\label{Csbrel}C_s\bfone_N\leq k_3\underline b.\end{equation}
Consider those entries of $f_i$ for which $\underline b_i+\underline\omega_i>0$ and those entries for which  $\underline b_i+\underline\omega_i$ are negative.   Rearrange the coordinates so that the first $\ell$ entries satisfy the former, the remaining the latter and put $C_s$ into corresponding block form:
$$C_s=\begin{pmatrix} C_{11}&C_{12}\\ C_{21}&C_{22}\end{pmatrix}.$$
Splitting $f$, $\underline b$ and $\underline\omega$ into appropriate parts we have
\begin{equation}\label{fCbo1}f_1=C_{11}f_1+C_{12}+\underline b_1+\underline\omega_1\end{equation}
\begin{equation}\label{fCbo2}f_2=C_{21}f _1+C_{22}f_2+\underline b_2+\underline\omega_2,\end{equation}
where $f=(f_1^T,f_2^T)^T$, etc. Applying the left nonnegative eigenvector of $C_{11}$ corresponding to $\rho(C_{11})$ to both sides of \eqref{fCbo1} we see that $\rho(C_{11})<1$.   There exists $k_4$ for which
\begin{multline}\label{etabd}C_{11}\bfone_{\ell}\leq k_4\bfone_{\ell},\ C_{12}\bfone_{N-\ell}\leq k_4\bfone_{\ell},\ \underline b_2\leq k_4\eta_n\bfone_{N-\ell},\  C_{21}\bfone_{\ell}\leq k_4\eta_n\bfone_{N-\ell},\hfill\\ \hfill\text{ and }C_{22}\bfone_{N-\ell}\leq k_4\eta_n\bfone_{N-\ell}.\end{multline}
From Lemma 1.17 we see that $\rho(C_{22})\leq k_4\eta_n$, which is less than 1/2 for all $n$ large.   Notice then for these $n$ (which we assume we consider from this point on) that if $\ell=0$ or $N$ then $\rho(C)=\rho(C_s)<1$ and we would be done.  So we assume $1<\ell<N$.

Let $x=(x_1^Tx_2^T)^T$ be a nonnegative eigenvector of of $C_s$ corresponding to $\rho(C_s)$.  Then we have
$$\rho(C_s)x_1=C_{11}x_1+C_{12}x_2$$
$$\rho(C_s)x_2=C_{21}x_1+C_{22}x_2.$$
When $\rho(C_s)>1$ we have $\rho(C_s)I-C_{22}$ invertible (smallest eigenvalue $\ge 1/2$ and $\|(\rho(C_s)I-C_{22})^{-1}\|\leq2$.
From the second identity we have $x_2=(\rho(C_s)-C_{22})^{-1}C_{21}x_1$ and plugging this into the first we find
\begin{equation}\label{x1eq}\rho(C_s)x_1=(C_{11}+C_{12}(\rho(C_s)I-C_{22})^{-1}C_{21})x_1.\end{equation}
We have when $\rho(C_s)\ge1$

\begin{multline*}(\rho(C_s)I-C_{22})^{-1}\bfone_{N-\ell}=\frac1{\rho(C_s)}\sum_{m=0}^{\infty}\frac{C_{22}^m}{\rho(C_s)^m}\bfone_{N-\ell}\hfill\\ \hfill\leq\frac1{\rho(C_s)}\sum_{m=1}^{\infty}\frac1{(2\rho(C_s))^m}\bfone_{N-\ell}=\frac1{\rho(C_s)}\frac1{1-\frac1{2\rho(C_s)}}\bfone_{N-\ell}=\frac1{\rho(C_s)-\frac12}\bfone_{N-\ell}.\end{multline*}
From \eqref{etabd} we have
$$C_{12}(\rho(C_s)I-C_{22})^{-1}C_{21}\bfone_{\ell}\leq\frac{\eta_nk_4^2}{\rho(C_s)-\frac12}\bfone_{\ell}.$$
Therefore, for all $n$ large, when $\rho(C_s)\ge1$
\begin{equation}\label{c12bd}\rho(C_{12}(\rho(C_s)I-C_{22})^{-1}C_{21})=\vertiii{C_{12}(\rho(C_s)I-C_{22})^{-1}C_{21}}_2\leq k_5\eta_n\end{equation}
If $x_1=0$, then $\rho(C_s)=\rho(C_{22})\leq1/2$.   Therefore, when $\rho(C_s)\ge1$ we have $x_1\neq0$ and so applying the spectral norm on \eqref{x1eq} we have
$$\rho(C_s)\leq\rho(C_{11})+k_5\eta_n,$$
and so \eqref{specradcon} holds.

Let $e=(e_1(z),\ldots,e_N(z))^T$. Let now $\omega\in\Bbb C^n$ have for its entries the lefthand side of  \eqref{elimit}.  We may assume
\begin{equation}\label{obd}|\omega|\leq k_4\eta_n\bfone_N.\end{equation}
From \eqref{Ge0def} and \eqref{elimit}, we have
$$e-e^0=A(e-e^0) + \omega,$$
where $A=(a_{jk})$ with
\begin{multline*}a_{jk}=\frac1{N^2}\frac1{(1+\frac nNe_k(z))(1+\frac nN e^0_k(z))}\hfill\\ \hfill\sum_{i=1}^n\frac{d_{ij}^2d_{ik}^2}{\left(\frac1N\sum_{\underline k=1}^N\frac{d_{i\underline k}^2}{1+\frac nNe_{\underline k}(z)}-z\right)\left(\frac1N\sum_{\underline k=1}^N\frac{d_{i\underline k}^2}{1+\frac nNe_{\underline k}^0(z)}-z\right)}.\end{multline*}

Using the same arguments seen earlier, we have from  \eqref{rhoC0bd}, \eqref{specradcon} for all $n$ large the existence of
a $k_6\in(0,1)$ such that
$$\rho(A)\leq k_6.$$
Therefore I-A is invertible and we get
\begin{equation}\label{IAinv}
e-e^0=(I-A)^{-1}\omega.\end{equation}



Let $D_2=D_n\circ D_n$, $F^0=\text{diag}(1+\frac nNe_1^0(z),\ldots,1+\frac nNe_N^0(z))$, $F=\text{diag}(1+\frac nNe_1(z),\ldots,1+\frac nNe_N(z))$, 
$$G^0=\text{diag}\left(\frac1{\frac1N\sum_{\underline k=1}^N\frac{d^2_{1\underline k}}{1+\frac nMe^0_{\underline k}(z)}-z},\ldots,\frac1{\frac1N\sum_{\underline k=1}^N\frac{d^2_{n\underline k}}{1+\frac nMe^0_{\underline k}(z)}-z}\right),
$$
and
$$G=\text{diag}\left(\frac1{\frac1N\sum_{\underline k=1}^N\frac{d^2_{1\underline k}}{1+\frac nMe_{\underline k}(z)}-z},\ldots,\frac1{\frac1N\sum_{\underline k=1}^N\frac{d^2_{n\underline k}}{1+\frac nMe_{\underline k}(z)}-z}\right).
$$
Then we can write
$A=\frac1{N^2}D_2^TGG^0D_2{F^0}^{-1}F^{-1}$.  Writing $F^{-1}=E^{-1}\text{diag}(\omega_1,\ldots,\omega_N)$ where the $\omega_i$'s are on the unit circle in the complex plane, we see that  since $((1/N)GD_2E^{-1})^*(1/N)GD_2E^{-1}=C_s$, we have from Lemma \ref{supnormdiag} $\vertiii{(1/N)GD_2F^{-1}}_2=\rho^{1/2}(C)$.  Similarly we have $\vertiii{(1/N)G^0D_2{F^0}^{-1}}_2=\rho^{1/2}(C^0)$.   Therefore, for all $n$ large we have $k_7\in(0,1)$ such that
$$\vertiii{F^{-1}AF}_2\leq\vertiii{(1/N)GD_2F^{-1}}_2\vertiii{(1/N)G^0D_2{F^0}^{-1}}_2<k_7,$$
and so $(I-F^{-1}AF)^{-1}$ exists and is bounded in spectral norm for all $n$ large.  Therefore, using the fact that $F$ is bounded in spectral norm we have
$$e-e^0=F(I-F^{-1}AF)^{-1}F^{-1}\omega=H\omega,$$
where $H$ is bounded in spectral norm for all $n$ large.

We have then $\eqref{ee0diff}=\frac1N\bfone_N^TGG^0(1/N)D_2F^{-1}{F^0}^{-1}H\omega.$  It is straightforward to verify that the entries of the vector $\bfone_N^TGG^0(1/N)D_2F^{-1}{F^0}^{-1}$ are bounded for all large $n$.
Using the fact that $\|\bfone_N\|_2=\sqrt N$ along with  \eqref{obd} we conclude that

\begin{multline*}\left|\frac1N\bfone_N^TGG^0(1/N)D_2F^{-1}{F^0}^{-1}H\omega\right|\leq \frac1N\|\bfone_N^TGG^0(1/N)D_2F^{-1}{F^0}^{-1}\|_2\vertiii{H}_2\|\omega\|_2\hfill\\ \hfill\leq k_8\eta_n\to0\end{multline*}
as $n\to\infty$.  Therefore we get \eqref{me0limit}.

We proceed to complete the proof of the theorem.   With probability one, say on the set $A$, \eqref{me0limit} holds for a countably infinite collection $\{z_m\}$ of $z\in \Bbb C^+$ uniformly bounded away from the real axis having a cluster point.   
For any $\omega\in A$, and any vaguely converging subsequence of $F^{B_{n_j}}$, say to $F_{\mu}$ with $\mu$ a sub-probability measure, we have $m_{n_j}(z)\to m_{\mu}(z)
\equiv\int\frac1{x-z}dF_{\mu}$, $z\in\{z_m\}$.   
Necessarily $G_{n_j}(z)\to m_{\mu}(z)$,  $z\in\{z_m\}$. By  Lemma \ref{vaguelimit} we have $F_{n_j}^0$ converging vaguely to the distribution function of a measure, which, because of 
uniqueness of measures and their Stieltjes transforms, must necessarily be $\mu$.   Thus, $D(F^{B_{n_j}},F_{n_j}^0)\to0$ on this subsequence.   Since by the Helly selection theorem for an arbitrary subsequence, there exists a further
subsequence for which vague convergence holds, we must have 
$$D(F^{B_{n}},F_{n}^0)\to0,\quad \omega\in A.$$
Combining this result with \eqref{Dep} we have for any $\epsilon>0$ with probability one
$$\limsup_nD(F^{B_n},F_{n,\epsilon}^0)\leq\epsilon,$$
where $B_n$ is now the original matrix defined in \eqref{Bndef}
and $F_{n,\epsilon}$ is the distribution function of the probability measure having Stieltjes transform $G_{n,\epsilon}$, which is $G_n$ defined in terms of the truncated $d_{jk}^n$'s,
namely $d_{jk}^nI(d_{jk}^n\leq d_{\epsilon})$ with $d_{\epsilon}\ge M_{\epsilon}$.

For the proof of the corollary, we let $A$ be a set of probability one for which for each $\omega\in A$
$$\limsup_n D(F^{B_n},F_{n,1/m}^0)\leq 1/m,\quad m=1,2,\ldots.$$
For fixed $\omega\in A$, choose integers $N_2>N_1>0$ arbitrarily.  Choose integer $N_3>N_2$ for which $D(F^{B_n},F_{n,1/3}^0)\leq 2/3$ for all $n\ge N_3$, and recursively
choose $N_m>N_{m-1}$ for which $D(F^{B_n},F_{n,1/m}^0)\leq 2/m$ for all $n\ge N_m$.  We thus have the existence of $\{\epsilon_n\}$, such that $D(F^{B_n},F_{n,\epsilon_n}^0)\to 0$, an event which occurs with probability one.

We conclude with a way to compute $e^0$ associated with $D_n=(d_{ij})$.  We will show that there exists a neighborhood of $e^0$ for which the scheme
\begin{equation}
  \label{1.10}
  e_j^{\ell+1}= \frac1n\sum_{i=1}^n\frac{d_{ij}^2}{ \frac1N\sum_{k=1}^N\frac{d_{ik}^2}{1+\frac nNe^{\ell}_k(z)}-z}.
\end{equation}
converges to $e^0$.    Consider the matrix $A$ in \eqref{e0diff} with $\underline e^0$ replaced by $e$ and denote it by $A(e)$.  Then we have $\rho(A(e^0))<1$.   By Lemma \ref{L.1.16} we can find a vector norm $\|\cdot\|$ where its induced matrix norm $\vertiii{\cdot}$ satisfies $\vertiii{A(e^0)}\leq\alpha<1$ for some $\alpha$.   Then, for a given $\beta\in(\alpha,1)$,  by continuity we can find a $\|\cdot\|$ open ball $\Bbb B$ of $e^0$ such that $\vertiii{A(e)}\leq\beta$ for all $e\in\Bbb B$.  Therefore, writing $e^{\ell}=(e_1^{\ell}(z),\ldots,e_N^{\ell}(z))^T$, $e^{\ell+1}=(e_1^{\ell+1}(z),\ldots,e_N^{\ell+1}(z))^T$, if $e^{\ell}\in\Bbb B$ we have
$$e^0-e^{\ell+1}=A(e^{\ell})(e^0-e^{\ell}),$$
and $$\|e^0-e^{\ell+1}\|\leq\vertiii{A(e^{\ell})}\|e^0-e^{\ell}\|\leq\beta|\|e^0-e^{\ell}\|.$$
Therefore $e^{\ell+1}\in\Bbb B$ and we get convergence to $e^0$.

\subsection*{Acknowledgement} {Special thanks go to Zhidong Bai for his aid in deciding on the tightness-like condition on the entries of $D_n$.}

\end{document}